\date{}
\newtheorem{theorem}{Theorem}[section]
\newtheorem{definition}[theorem]{Definition}
\newtheorem{lemma}[theorem]{Lemma}
\newtheorem{corollary}[theorem]{Corollary}
\newtheorem{proposition}[theorem]{Proposition}
\newtheorem{example}[theorem]{Example}
\newtheorem{remark}[theorem]{Remark}
\numberwithin{equation}{section}
\begin{document}

\centerline{{\bf Sobolev functions on infinite-dimensional domains}}

\vskip .2in

\centerline{{\bf Vladimir I.~Bogachev$^{a,}$\footnote{Corresponding author, vibogach@mail.ru.},
Andrey Yu.~Pilipenko$^{b}$,}}

\vskip .1cm

\centerline{{\bf and Alexander V.~Shaposhnikov$^{c}$}}

\vskip .2in

$^{a}$ Department of Mechanics and Mathematics, Moscow State University, 119991 Moscow, Russia
and St.-Tikhon's Orthodox University, Moscow, Russia

$^{b}$ Institute of Mathematics NAS, 3, Tereschenkivska st., 01601 Kiev,  Ukraine

$^{c}$ Department of Mechanics and Mathematics, Moscow State University, 119991 Moscow, Russia

\vskip .2in
\centerline{{\bf Abstract}}

\vskip .1in

We study extensions of Sobolev and BV functions on infinite-dimen\-si\-o\-nal domains.
Along with some
positive results we  present a negative solution of the long-standing problem of existence
of  Sobolev extensions of functions in Gaussian Sobolev spaces from a convex domain to the whole space.

AMS Subject Classification: 28C20, 46G12, 60H07

Keywords: Sobolev class, extension, differentiable measure, Gaussian measure

\vskip .2in

\section*{Introduction and notation}

In this paper we study several different  Sobolev and BV classes of functions
on infinite-dimensional domains with measures.
Similarly to the finite-dimensional case one can define such classes
by means of suitable completions or via the integration by parts formula.
However, in infinite dimensions new features appear.
The extension problem for these classes is discussed; along with some
positive results we  present a negative solution of the long-standing problem of existence 
of Sobolev extensions of functions in Gaussian Sobolev spaces from a convex domain to the whole space.

In sections 1 and 2, we discuss Sobolev classes on infinite-dimensional domains with Gaussian 
measures; our main result here is Theorem~2.4 that gives Gaussian Sobolev functions 
on convex domains without So\-bo\-lev extensions. In Section~3 we consider more general differentiable 
measures and discuss two types of BV functions on domains; the corresponding main result is Theorem~3.10. 
Note that some of the results and definitions in this section are new also in the Gaussian case; 
in particular, our definition of BV functions based on semivariations of vector measures is a novelty 
also for Gaussian measures. 

Let us recall that, given a domain $U$ in $\mathbb{R}^d$, there are
two common ways of introducing the Sobolev classes of functions
on~$U$.

Let $1\le p<\infty$.
The Sobolev class $W^{p,1}(U)$ consists of all functions
(more precisely, of equivalence classes)
$f\in L^p(U)$ such that the generalized derivatives $\partial_{x_i}f$
(defined by means of the integration by parts formula) are also in $L^p(U)$.
This space is complete with respect to the Sobolev norm
$$
\|f\|_{p,1,U}:=\|f\|_{L^p(U)}+\|\nabla f\|_{L^p(U)},
$$
where $\nabla f:=(\partial_{x_1}f,\ldots,\partial_{x_d}f)$. The same class coincides with the completion
of the space of smooth functions on $U$ with finite norm
$\|\,\cdot\,\|_{p,1,U}$ with respect to this norm.

Similar constructions lead to weighted Sobolev classes if in place of
Lebesgue measure we use a measure $\mu$ with some density $\varrho$ on~$U$.
However, in the latter case the notation $W^{p,1}(U,\mu)$ is reserved for the completion
of the set of smooth functions with respect to the naturally defined
weighted Sobolev norm $\|\,\cdot\,\|_{p,1,U,\mu}$
(certainly, only smooth functions with finite norm
are involved). Moreover, certain additional assumptions about the weight $\varrho$
are needed in order to obtain closable norms (which enables us to regard
$W^{p,1}(U,\mu)$ embedded into $L^p(U,\mu)$), see \cite[Chapter~2]{B10}.
The definition based on the integration by parts formula requires some
other assumptions about the weight $\varrho$ such as its local
membership in the usual Sobolev class; this definition leads to the classes
$G^{p,1}(U,\mu)$, which may be larger than $W^{p,1}(U,\mu)$ even in the case
where both definitions apply. Yet another (but closely related)
 space $D^{p,1}(U,\mu)$ consists of all functions $f\in L^p(U,\mu)$
 with partial derivatives in $L^p(U,\mu)$, but this time these partial derivatives
 are defined as follows: for each $i\le d$ there is a version of $f$ that is
 locally absolutely continuous in $x_i$ and $\partial_{x_i}f$ is defined
 as the partial derivative of this version.
Finally, one can use the class $H^{p,1}(U,\mu)$ consisting of all
functions $f$ in $W^{1,1}_{loc}$ that have finite weighted
norm  $\|\,\cdot\,\|_{p,1,U,\mu}$. The space $H^{p,1}(U,\mu)$ may be strictly
larger than $W^{p,1}(U,\mu)$, see \cite{Zhikov}.
If $U=\mathbb{R}^d$ and $\varrho$ is positive
and locally Lipschitzian, then
$$
W^{p,1}(\mathbb{R}^d,\mu)=G^{p,1}(\mathbb{R}^d,\mu)=D^{p,1}(\mathbb{R}^d,\mu)
=H^{p,1}(\mathbb{R}^d,\mu)
$$
 whenever $1\le p<\infty$.

All the four constructions have natural infinite-dimensional analogues,
see \cite{B10}, \cite{Malliavin97}, \cite{Shig},
where the case of the whole space is discussed
(Sobolev classes over infinite-dimensional Gaussian measures
were first introduced in \cite{Frolov70}, \cite{Frolov72}, some related
objects like $H$-gradient appeared already in the seminal paper \cite{Gross}; later
these classes became very useful in the Malliavin calculus).
In this paper we consider the case that is an analogue of the finite-dimensional
situation with a positive locally Sobolev weight, so that we have two
series of Sobolev spaces $W^{p,1}(U,\mu)$ and $G^{p,1}(U,\mu)$ arising
via completions and via integrations by parts. In addition, we consider two
series of spaces of functions of bounded variation $SBV(U,\mu)$ and $BV(U,\mu)$
defined similarly to the finite-dimensional case (with weights)
and involving for generalized derivatives  vector measures of bounded
semivariation and bounded variation, respectively, which makes them
 distinct in infinite dimensions.

 Most of our discussion concerns extensions of Sobolev and BV functions from domains
 to the whole space.
It is well-known that every function in the Sobolev class on a
bounded convex set in $\mathbb{R}^n$ extends to a function on the
whole space from the same Sobolev class (see \cite[\S 4.4]{EG}, \cite[\S 6.3]{M}
 or \cite[\S 5.10]{Ziemer}). An
analogous assertion is true also for weighted Sobolev classes with
sufficiently regular weights, for example, Gaussian. It has been shown in the
recent paper \cite{H} that for a Gaussian measure on an
infinite-dimensional space a weaker assertion is true: an
extension exists for all functions in some everywhere dense set in
the Gaussian Sobolev space (more precisely, in our notation, in
the class $D^{2,1}(V,\gamma)$  defined through Sobolev
derivatives) on a convex open set (or $H$-convex and $H$-open).
We prove the existence of Sobolev functions
on convex sets in an infinite-dimensional space equipped  with a
Gaussian measure without Sobolev extensions to the whole space. In
the case of a Hilbert space such a set can be chosen convex and
open. Next we introduce classes of functions of bounded variation
on infinite-dimensional convex domains and prove that all functions in such
classes admit BV extensions to the whole space in the case of domains
of bounded perimeter.

Any bounded (possibly signed) measure $\mu$ on
the Borel $\sigma$-algebra $\mathcal{B}(X)$ of a topological space $X$
can be uniquely written as $\mu=\mu^{+}-\mu^{-}$ with mutually singular nonnegative
measures $\mu^{+}$ and $\mu^{-}$. Let
$$
|\mu|:=\mu^{+}+\mu^{-}, \quad \|\mu\|:=|\mu|(X).
$$
We consider only Radon measures (possibly signed). A~bounded measure $\mu$ on
$\mathcal{B}(X)$ is called Radon if for every $B\in\mathcal{B}(X)$
and every $\varepsilon>0$ there is a compact set $K_\varepsilon\subset B$ such that
$|\mu|(B\backslash K_\varepsilon)< \varepsilon$.

For any measure $\mu$ and any function $f$ integrable with respect to $|\mu|$,
let $f\mu$ denote the measure given by the Radon--Nikodym density $f$ with respect to~$\mu$.

We recall  (see \cite{B}) that
a Radon probability measure $\gamma$ on a locally convex space $X$
with topological dual $X^*$ is called centered Gaussian if every functional
$l\in X^*$ is a centered Gaussian random variable on $(X,\gamma)$, i.e.,
$$
\int_X \exp (il)\, d\gamma =\exp\biggl(-\frac{1}{2}\int_X l^2\, d\gamma\biggr).
$$
The Cameron--Martin space $H=H(\gamma)$ of this measure
is the set of all vectors $h\in X$ with $|h|_H<\infty$,
where
$$
|h|_H:=\sup\Bigl\{ l(h)\colon\, l\in X^*, \ \|l\|_{L^2(\gamma)}\le 1\Bigr\}.
$$
This is also the set of all vectors the shifts along which give equivalent measures.

It is known that $h\mapsto |h|_H$ is a Hilbert norm on $H$ and that $H$
is separable.
Let $X_\gamma^*$ denote the closure of $X^*$ in $L^2(\gamma)$.
For every element $h\in H$ there is a unique element $\widehat{h}\in X_\gamma^*$
such that
$$
l(h)=\int_X l \widehat{h}\, d\gamma \quad \forall\, l\in X^*.
$$
Conversely, for every $\xi\in X_\gamma^*$ there is a unique element
$h\in H$ such that $\xi=\widehat{h}$. The element $\widehat{h}$ is called
the measurable linear functional generated by $h$.

We shall deal with the so-called standard
Gaussian measure $\gamma$ on the countable
product of the real lines $X=\mathbb{R}^\infty$ equal to the
countable product of the standard Gaussian measures on the real
line.
Its Cameron--Martin space is the usual $l^2$ and the measurable linear
functionals are described by the formula $\sum_{n=1}^\infty h_nx_n$,
where $(h_n)\in l^2$ and the series converges in $L^2(\gamma)$ and almost everywhere.
By the  Tsirelson isomorphism theorem, every centered Radon Gaussian
measure with an infinite-dimensional Cameron--Martin space is isomorphic
to this standard Gaussian measure by means of a measurable linear isomorphism.
More precisely, this isomorphism can be found in the form
$(x_n)\mapsto \sum_{n=1}^\infty x_n e_n$, where $\{e_n\}$ is an orthonormal
basis in $H(\gamma)$ and the series converges almost everywhere in $X$.

The objects and questions considered below are invariant
with respect to measurable linear isomorphisms of locally convex
spaces with centered Radon Gaussian measures, hence  it would be possible
to deal with the countable power of the standard Gaussian measure on~$\mathbb{R}$
or with the classical Wiener measure (or any infinite-dimensional
Gaussian measure on a Hilbert space).

Let  $\mathcal{F}\mathcal{C}_b^\infty$ denote the
class of all functions on $\mathbb{R}^\infty$ of the form
$$
f(x)=f_0(x_1,\ldots,x_n), \quad f_0\in C_b^\infty (\mathbb{R}^n).
$$
The gradient of $f$ along the subspace $H=l^2$ is defined by the equality $\nabla
f=(\partial_{x_k}f)_{k=1}^\infty$ and is a mapping with values
in~$H$.

Similarly, in the case of a general locally convex space $X$
the symbol $\mathcal{F}\mathcal{C}_b^\infty$ denotes the space of all
functions of the form
$$
f(x)=f_0(l_1(x),\ldots,l_n(x)), \quad f_0\in C_b^\infty (\mathbb{R}^n),
l_i\in X^*.
$$
If $H\subset X$ is a continuously embedded Hilbert space
with an orthonormal basis $\{e_i\}$, then the gradient $D_Hf$
of $f\in \mathcal{F}\mathcal{C}_b^\infty$ along $H$
is defined by the equality
$$
(D_Hf(x),h)_H=\partial_h f(x).
$$
It is readily seen that $D_Hf(x)=\sum_{i=1}^\infty \partial_{e_i}f(x) e_i$.
The series converges in $H$ for every $x$, since
$$
\partial_{e_i}f(x)=\sum_{j=1}^n \partial_{x_j}f_0(l_1(x),\ldots,l_n(x)) l_j(e_i),
$$
so that we have $\sum_{i=1}^\infty |\partial_{e_i}f(x)|^2<\infty$,
because $\sum_{i=1}^\infty |l_j(e_i)|^2<\infty$ for every $j=1,\ldots,n$
by the continuity of $l_j$ on $H$ (which follows by the continuity of the embedding $H\to X$).

\section{Gaussian Sobolev functions on infinite-dimensional domains}

In this section and the next one $\gamma$ is a centered Radon Gaussian measure
on a locally convex space~$X$ and $H$ is its Cameron--Martin space;
without loss of generality one may assume that
$\gamma$ is the countable power of the standard Gaussian measure on the real line and
is defined on~$\mathbb{R}^\infty$, the space of all real sequences. This measure is called
the standard Gaussian product-measure.
Let $\{e_n\}$ be an orthonormal basis in $H$; if $H=l^2$, we use the standard basis of~$l^2$.
Set
$$
x_n:=\widehat{e_n}(x).
$$
In the case of the standard Gaussian product-measure and the standard basis of~$l^2$
the functions $x_n:=\widehat{e_n}(x)$ are just the usual coordinate functions on~$\mathbb{R}^\infty$.

Denote by $\|\,\cdot\,\|_p$ the standard norm in $L^p(\gamma)$;
the same notation is used for the norm in the space
$L^p(\gamma,H)$ of measurable mappings $v$ with values in~$H$ such
that $|v|_H\in L^p(\gamma)$. For $p\in [1,+\infty)$ the Sobolev
class $W^{p,1}(\gamma)$ is defined as the completion of
$\mathcal{F}\mathcal{C}_b^\infty$ with respect to the Sobolev norm
$$
\|f\|_{p,1}=\|f\|_{p}+\|\nabla f\|_p.
$$
Each  function $f$ in $W^{p,1}(\gamma)$  has the Sobolev gradient
 $\nabla f=(\partial_{e_n}f)$ in $L^p(\gamma,H)$ whose components $\partial_{e_n}f$ satisfy the identity
$$
\int \varphi(x) \partial_{e_n}f(x) \, \gamma (dx) =-\int
[f(x)\partial_{e_n}\varphi(x)-x_n f(x)\varphi(x)]  \, \gamma (dx)
$$
for all $\varphi\in \mathcal{F}\mathcal{C}_b^\infty$. This definition of the
class $W^{p,1}(\gamma)$ is equivalent to the following one (see
\cite{B}, \cite{B10}): $f\in W^{p,1}(\gamma)$ precisely when $f\in
L^p(\gamma)$ and, for every fixed $n$, the function $f$
 has a version  (i.e., an almost everywhere equal function) $\widetilde{f}$ such that the
 functions $t\mapsto \widetilde{f}(x+te_n)$, where $x\in X$, are absolutely continuous
 on each line and the mapping $\nabla f=\sum_{n=1}^\infty \partial_{e_n} \widetilde{f}e_n$
  belongs to $L^p(\gamma,H)$; here
$\partial_{e_n} \widetilde{f}(x)$ is defined as the derivative at
zero of the function $t\mapsto \widetilde{f}(x+te_n)$ (one can
show that it exists $\gamma$-a.e.). Actually the definition of the
Sobolev class does not depend on our choice of an orthonormal
basis in the  Cameron--Martin  space.

In a particular way one introduces the space $BV(\gamma)$ of
functions of bounded variation, containing $W^{1,1}(\gamma)$, see
 \cite{Fuk}, \cite{FH}, \cite{Hino04}, \cite{Hino09}, \cite{H},
 \cite{A}, \cite{A2}.
 It consists of functions $f\in L^1(\gamma)$ such that
$x_n f\in L^1(\gamma)$ for all $n$ and there is an  $H$-valued
measure
 $\Lambda f$ of bounded variation for which
the scalar measures $(\Lambda f,e_n)_H$ satisfy the identity
$$
\int \varphi(x)\, (\Lambda f,e_n)_H(dx) =-\int
[f(x)\partial_{e_n}\varphi(x)-x_n f(x)\varphi(x)]  \, \gamma (dx)
$$
for all $\varphi\in \mathcal{F}\mathcal{C}_b^\infty$. If $f\in W^{1,1}(\gamma)$, then
$\Lambda f=\nabla f\cdot\gamma$. The space $BV(\gamma)$ is Banach
with the norm
$$
\|f\|_{1,1}=\|f\|_1+\|\Lambda f\|,
$$
 where
$\|\Lambda f\|$ is the variation of the vector measure $\Lambda f$
defined by
$$
\|\Lambda f\|=\sup\sum_{i=1}^\infty |\Lambda
f(B_i)|_H,
$$
 where sup is taken over all partitions of the space
into disjoint Borel parts~$B_i$. Let us note that there is $C>0$
such that
$$
\| \widehat{h}f\|_{L^1(\gamma)}\le C\|f\|_{1,1},\quad f\in
BV(\gamma),
$$
where $\widehat{h}(x)=\sum_{n=1}^\infty h_n x_n$,
$\sum_{n=1}^\infty h_n^2\le 1$.

Let us proceed to domains.

Suppose now that we are given a Borel or $\gamma$-measurable set
$V\subset X$ of positive $\gamma$-measure such that  its
intersection with every straight line of the
 form $x+\mathbb{R}^1 e_n$  is a convex set~$V_{x,e_n}$.

If the sets $(V-x)\cap H$ are open in $H$ for all $x\in V$, then
$V$ is called $H$-open (this property is equivalent to the fact
that $V-x$ contains a ball from~$H$ for every $x\in V$, and is
weaker than openness of $V$ in~$X$), and if all such sets are
convex, then $V$ is called $H$-convex. The latter property is
weaker than the usual convexity. Obviously, for any $H$-convex and $H$-open set $V$
all nonempty sets $V_{x,e_n}$ are open intervals (possibly unbounded).

\begin{example}
\rm
Any open convex set is $H$-open and $H$-convex.
However, the convex ellipsoid
$$
U=\Bigl\{x\in\mathbb{R}^\infty\colon\, \sum_{n=1}^\infty n^{-2} x_n<1\Bigr\}
$$
is not open in $\mathbb{R}^\infty$, but is $H$-open, where $H=l^2$. This ellipsoid has
positive measure with respect to the standard Gaussian product-measure~$\gamma$.
The set
$$
Z=\Bigl\{x\in\mathbb{R}^\infty\colon\, \lim\limits_{N\to\infty}
N^{-1} \sum_{n=1}^N  x_n^2=1\Bigr\}
$$
is Borel and has full measure with respect to $\gamma$ (by the law of large numbers).
It is not convex: if $x\in Z$, then $-x\in Z$, but $0\not\in Z$. It is also not difficult
to see that $Z$ has no interior. However,
$Z$ is $H$-open and $H$-convex, since for every $x\in Z$ we have $(Z-x)\cap H=H$, that is,
for every $h\in H$ we have $x+h\in Z$. Indeed,
$$
N^{-1} \sum_{n=1}^N  (x_n+h_n)^2-N^{-1} \sum_{n=1}^N  x_n^2
=N^{-1} \sum_{n=1}^N  (h_n^2+2x_nh_n),
$$
which tends to zero as $N\to\infty$, because $N^{-1} \sum_{n=1}^N  h_n^2\to 0$ for each $h\in H$
and $|N^{-1}\sum_{n=1}^N  x_nh_n|\to 0$ by the Cauchy inequality.
\end{example}

There are several natural ways of introducing Sobolev classes
on~$V$. The first one is considering the class $W^{p,1}(V,\gamma)$
equal to the completion of $\mathcal{F}\mathcal{C}_b^\infty$ with respect to the Sobolev
norm $\|\,\cdot\,\|_{p,1,V}$ with the order of integrability~$p$,
evaluated with respect to the restriction of $\gamma$ to~$V$. This
class is contained in the class $D^{p,1}(V,\gamma)$ consisting of
all functions $f$ on~$V$ belonging to $L^p(V,\gamma)$ and having
versions of the type indicated above, but with the difference that
now the  absolute continuity is required only on the closed
intervals belonging to the sections $V_{x,e_n}$. The class
$D^{p,1}(V,\gamma)$ is naturally equipped with the Sobolev norm
$\|\,\cdot\,\|_{p,1,V}$ defined by the restriction of $\gamma$
to~$V$:
$$
\|f\|_{p,1,V}=\biggl(\int_V |f|^p\, d\gamma\biggl)^{1/p}+
\biggl(\int_V |\nabla f|^p\, d\gamma\biggl)^{1/p}.
$$
In the paper \cite{H} the Sobolev class $D^{2,1}(V,\gamma)$ was used  (denoted
there by $W^{1,2}(V)$). In the finite-dimensional case for convex
$V$ both classes coincide, the infinite-dimensional situation is
less studied, but for $H$-convex $H$-open sets one has
$W^{2,1}(V,\gamma)=D^{2,1}(V,\gamma)$, which follows from \cite{H}, where
it is shown that $D^{2,1}(V,\gamma)$ contains a dense set of functions possessing
extensions of class $W^{2,1}(\gamma)$ and for this reason belonging
to $W^{2,1}(V,\gamma)$. It
is readily verified  that the spaces $W^{p,1}(V,\gamma)$ and
$D^{p,1}(V,\gamma)$ with the Sobolev norm are Banach.

Similar Sobolev classes can be defined for more general differentiable
measures (see \cite{B10}, \cite{B97}, \cite{BM} and Section~3 below).

\section{Gaussian Sobolev functions without extensions}

In this section we prove the existence of functions in
$W^{p,1}(V,\gamma)$ that have no Sobolev extension to the whole
space. We provide a complete justification of the example announced in \cite{BSh},
where details have been omitted, also correcting some estimates; in addition, a rather explicit
function with the stated property is given.
Note that one can introduce more narrow Sobolev classes on
$V$ that admit extensions. For example, in the space
$W^{p,1}(V,\gamma)$ one can take the closure $W^{p,1}_0(V,\gamma)$
of the set of functions from $W^{p,1}(\gamma)$ with compact
support in~$V$; the functions from $W^{p,1}_0(V,\gamma)$ extended
by zero outside~$V$ belong to $W^{p,1}(\gamma)$. For certain very
simple sets~$V$, say, half-spaces, it is easy to define
explicitly an extension operator.

\begin{example}
\rm
Let $V=\{x\colon\, \widehat{h}(x)>0\}$, where $|h|_H=1$.
 Any  function $f\in W^{p,1}(V,\gamma)$ has an extension
of class $W^{p,1}(\gamma)$ defined by
$$
\widetilde{f}(x)=f(x-2\widehat{h}(x)h)\quad \hbox{if } \widehat{h}(x)<0.
$$
For example, if $X=\mathbb{R}^\infty$ and $h=e_1$, then $V=\{x_1>0\}$
and $\widetilde{f}(x_1,x_2,\ldots)=f(-x_1,x_2,\ldots)$ whenever $x_1<0$.
Taking a sequence of functions $f_j\in \mathcal{F}\mathcal{C}_b^\infty$ whose restrictions
to $V$ converge to $f$ in the norm of $W^{p,1}(\gamma,V)$, we see that the functions
$\widetilde{f_j}$ redefined on the set $\{\widehat{h}\le 0\}$ by
$\widetilde{f_j}(x)=f_j(x-2\widehat{h}(x)h)$ belong to $W^{p,1}(\gamma)$
and converge in the Sobolev norm.
In particular, $\|\widetilde{f_j}\|_{L^p(\gamma)}=2^{1/p}\|f_j\|_{L^p(V,\gamma)}$,
$\|D_H\widetilde{f_j}\|_{L^p(\gamma)}=2^{1/p}\|D_H f_j\|_{L^p(V,\gamma)}$.
\end{example}

It is not clear
whether there are essentially infinite-dimensional domains~$V$ for
which all Sobolev functions  have extensions.

However, functions with bounded derivatives extend from any $H$-convex domains.

\begin{proposition}
Let $V$ be $H$-convex and let $f\in W^{2,1}(\gamma, V)$ be such that $|D_H f(x)|\le C$ a.e. in~$V$.
Then there is a function $g\in W^{p,1}(\gamma)$ for all $p<\infty$
such that $g|_V=f|_V$ a.e. on~$V$ and
$|g(x+h)-g(x)|\le C |h|_H$ for all $x\in X$ and $h\in H$.
\end{proposition}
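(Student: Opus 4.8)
The plan is to realize $f$ as a uniformly $H$-Lipschitz function on $V$ and then extend it by an infimal-convolution (McShane-type) formula. First I would observe that, since $V$ is $H$-convex, for any two points $x,y\in V$ with $y-x\in H$ the whole segment $[x,y]$ lies in $V$, and along this segment $f$ has an absolutely continuous version (this is part of the definition of $W^{2,1}(\gamma,V)=D^{2,1}(V,\gamma)$ on $H$-convex $H$-open sets); integrating $\partial_h f$ along the segment and using $|D_Hf|\le C$ a.e.\ gives
$$
|f(y)-f(x)|\le C\,|y-x|_H
$$
for $\gamma$-a.e.\ $x\in V$ and all $h=y-x\in H$ (one first gets this for a.e.\ pair and then fixes a genuine version for which it holds everywhere on the relevant conull set, using the quasi-invariance of $\gamma$ under shifts by $H$). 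So $f$ is, after modification on a null set, a function on $V$ that is $C$-Lipschitz with respect to the Cameron--Martin pseudometric $d_H(x,y):=|x-y|_H$ (with $d_H(x,y)=+\infty$ when $x-y\notin H$).

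Next I would extend $f$ to all of $X$ by the McShane formula
$$
g(x):=\inf_{y\in V}\bigl(f(y)+C\,|x-y|_H\bigr),
$$
where the infimum over the empty set of finite terms is interpreted suitably; since every $x\in X$ has $\gamma$-a.e.\ a point of $V$ in $x+H$ (because $V$ has positive measure and its $H$-sections are a.e.\ nonempty by Fubini along $H$), $g$ is finite $\gamma$-a.e. The standard McShane argument shows $g|_V=f|_V$ (here $H$-convexity is used: for $y\in V$ the term $f(y)+C|x-y|_H\ge f(x)$ when $x\in V$, by the Lipschitz bound along the segment) and that $g$ satisfies
$$
|g(x+h)-g(x)|\le C\,|h|_H\qquad\text{for all }x\in X,\ h\in H,
$$
directly from the triangle inequality for $|\cdot|_H$. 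Measurability of $g$ follows by writing the infimum over a countable dense subset of $V\cap(x+H)$ in a measurable selection of $H$-translates, or more simply by noting $g$ is a pointwise limit of measurable functions $g_N$ built from finite-dimensional sections.

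It remains to check $g\in W^{p,1}(\gamma)$ for every $p<\infty$. Because $g$ is bounded along $H$-lines with slope $\le C$ and, being an infimum of affine-in-$h$ functions, is differentiable along $H$ a.e.\ with $|D_Hg|\le C$ a.e., the Sobolev-on-lines characterization of $W^{p,1}(\gamma)$ recalled in Section~1 applies: along each line $x+\mathbb{R}e_n$ the function $t\mapsto g(x+te_n)$ is Lipschitz, hence locally absolutely continuous, and $|\nabla g|_H\le C$ a.e., so $\|\nabla g\|_{L^p(\gamma,H)}\le C<\infty$; one also needs $g\in L^p(\gamma)$, which follows since $|g(x)|\le |f(y_0(x))|+C|x-y_0(x)|_H$ for a suitable measurable choice $y_0(x)\in V$, and both terms have finite $L^p(\gamma)$ norm by Gaussian integrability of $H$-Lipschitz functions (Fernique-type estimates) once one arranges $y_0$ to depend boundedly on $x$; alternatively, approximate $g$ by $g\cdot\chi_{\{|g|\le R\}}$ composed with truncation and pass to the limit using the gradient bound. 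The main obstacle I expect is this last integrability/measurability bookkeeping — making the McShane infimum genuinely measurable on the infinite-dimensional space $X$ and controlling $\|g\|_{L^p(\gamma)}$ — rather than the Lipschitz extension itself, which is formal; the convexity hypothesis on $V$ enters only to guarantee $g|_V=f$ and to validate the segment integration used to derive the $H$-Lipschitz bound for $f$.
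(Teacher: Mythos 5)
Your strategy (derive an $H$-Lipschitz bound on $V$, then extend by the McShane infimal convolution $g(x)=\inf_{y\in V}(f(y)+C|x-y|_H)$) is a reasonable alternative to the paper's argument, but the two points you defer as ``bookkeeping'' are in fact the crux, and neither is resolved. First, before McShane can even be applied you need a single version of $f$ satisfying $|f(y)-f(x)|\le C|y-x|_H$ for \emph{all} pairs $x,y\in V$ with $y-x\in H$. Integrating $\partial_hf$ along segments gives this only for a.e.\ $x$ for each \emph{fixed} $h$ (with the exceptional null set depending on $h$), and quasi-invariance does not by itself produce one version that works simultaneously for the uncountably many directions in $H$. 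The paper avoids this by obtaining, via conditional measures, versions $f_n$ that are Lipschitz only along finite-dimensional subspaces $H_n$; the full $H$-Lipschitz property is recovered only for the limit function at the very end, by invoking \cite[Corollary~5.11.8]{B}.

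Second, the measurability of the infimum $\inf_{y\in V}(f(y)+C|x-y|_H)$ is a genuine obstruction, not a formality: it is an uncountable infimum over the section $(x+H)\cap V$, which varies with $x$. Replacing it by the infimum over $x+D$ for a fixed countable dense $D\subset H$ works only if the sections $(V-x)\cap H$ are open in $H$ (so that $D$ meets them densely); the proposition assumes only $H$-convexity, and a convex section with empty interior in $H$ can miss $D$ entirely, making the countable infimum $+\infty$ while the true infimum is finite. Producing a measurable $H$-Lipschitz extension is precisely the content of the results the paper cites (\cite[Theorem~5.11.10]{B}, \cite{B99}), and the paper deliberately applies them only along the finite-dimensional $H_n$, then passes to Ces\`aro means of the $g_n$ converging in $L^2(\gamma)$ to assemble the full extension; the unbounded case is then handled by truncating $f$, extending each truncation, and controlling the means of the extensions via the Poincar\'e inequality and Fatou's theorem --- a step that has no counterpart in your outline. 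Your reduction of $g\in L^p(\gamma)$ to Gaussian integrability of $H$-Lipschitz functions is fine in principle, but it presupposes exactly the measurability you have not established.
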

\begin{proof}
Suppose first that $|f(x)|\le N$ for some~$N$.
Let $\{e_n\}$ be an orthonormal basis in $H$. Let $H_n$ be the linear span of $e_1,\ldots,e_n$.
By using conditional measures it is readily
seen that for every $n$ there is a version $f_n$ such that
$|f_n(x+h)-f_n(x)|\le C |h|_H$ for all $x\in V$ and $h\in H_n$ with $x+h\in V$.
One can find a measurable function $g_n$ extending $f_n$
to all of $X$ such that $|g_n(x)|\le N$ and
$$|g_n(x+h)-g_n(x)|\le C |h|_H \quad \hbox{for all $x\in X$ and $h\in H_n$,}
$$
 see
\cite[Theorem~5.11.10]{B} or~\cite{B99}. Passing to a subsequence we may assume that
the sequence $(g_1+\cdots+g_n)/n$ converges in $L^2(\gamma)$ to some function~$g$. Obviously,
$|g(x)|\le N$ and $g=f$ a.e. on~$V$.
It is straightforward to verify that $g\in W^{2,1}(\gamma)$ and $|D_Hg|_H\le C$.
Then we have $g\in W^{p,1}(\gamma)$ for all $p<\infty$.
In addition, $g$ has a version (denoted by the same symbol) such that
$|g(x+h)-g(x)|\le C |h|_H$ for all $x\in X$ and $h\in H$, see \cite[Corollary~5.11.8]{B}.

In order to remove the restriction that $|f|\le N$ we apply the considered case to the
functions $\psi_N(f)$, where $\psi_N\in C^\infty(\mathbb{R})$, $\psi_N(t)=t$ if $|t|< N$,
$\psi_N(t)=N {\rm sign}\, t$ if $|t|\ge N$. It is easily verified that $\psi_N(f)\in W^{2,1}(V,\gamma)$
and $|D_H \psi_N(f)|_H\le C$. We obtain functions $g_N\in W^{2,1}(\gamma)$ such that $g_N|V=f|_V$ a.e. on~$V$,
$|g_N|\le N$ and $|g_N(x+h)-g_N(x)|\le C |h|_H$ for all $x\in X$ and $h\in H$.
We observe that the integrals of $g_N$, denoted by $I(g_N)$, are uniformly bounded.
Indeed, otherwise, passing to a subsequence, we may assume that $|I(g_N)|\to +\infty$.
The integrals of $|g_N-I(g_N)|$ are uniformly bounded due to the estimate $|D_H g_N|_H\le C$
and the Poincar\'e inequality,
see, e.g., \cite[Theorem~5.5.1]{B}. Since $g_N(x)\to f(x)$ for every $x\in V$,
we have $|g_N(x)-I(g_N)|\to \infty$ on~$V$, which is impossible by Fatou's theorem.
Applying the Poincar\'e inequality again we see that $\{g_N\}$ is bounded in $L^2(\gamma)$,
hence the same reasoning as in the previous step yields the desired extension.
\end{proof}

\begin{lemma}\label{lem2.1}
If the set $V$ is such that for some $p>1$ every function $f\in
W^{p,1}(V,\gamma)$ has an extension $g\in W^{p,1}(\gamma)$, then
there exists an extension $g_f\in W^{p,1}(\gamma)$ such that
$\|g_f\|_{p,1}\le C \|f\|_{p,1,V}$ with some common constant~$C$.
If the condition is fulfilled for $p=1$, then the conclusion is
true with $g_f\in BV(\gamma)$.
\end{lemma}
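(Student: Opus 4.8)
The plan is to invoke the closed graph theorem. Assume every $f\in W^{p,1}(V,\gamma)$ admits an extension $g\in W^{p,1}(\gamma)$. Consider the restriction map $R\colon W^{p,1}(\gamma)\to W^{p,1}(V,\gamma)$, $g\mapsto g|_V$. This is a bounded linear operator: indeed $\|g|_V\|_{p,1,V}\le\|g\|_{p,1}$ directly from the definitions of the two Sobolev norms, since restricting the measure to $V$ only decreases the $L^p$-integrals of $g$ and of $\nabla g$ (and the Sobolev gradient of $g|_V$ agrees a.e.\ on $V$ with $\nabla g$, because the versions of $g$ absolutely continuous along the lines $x+\mathbb{R}e_n$ restrict to versions of $g|_V$ absolutely continuous on the sections $V_{x,e_n}$). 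The hypothesis says exactly that $R$ is surjective. Hence by the open mapping theorem $R$ is open, so there is a constant $c>0$ such that every $f$ in the unit ball of $W^{p,1}(V,\gamma)$ is the image under $R$ of some $g$ with $\|g\|_{p,1}\le 1/c$; rescaling gives, for arbitrary $f$, an extension $g_f$ with $\|g_f\|_{p,1}\le C\|f\|_{p,1,V}$, $C=1/c$. The only point needing care is that both spaces are Banach — this was noted in Section~1 for $W^{p,1}(\gamma)$ and just after Example~1.6 for $W^{p,1}(V,\gamma)$ — so the open mapping theorem applies.

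For the case $p=1$ one cannot work inside $W^{1,1}(V,\gamma)$ alone, because an extension is only asserted to exist in $BV(\gamma)$. Instead I would argue with the graph of the (possibly multivalued) extension correspondence. Define
$$
\mathcal{G}=\bigl\{(f,g)\in W^{1,1}(V,\gamma)\times BV(\gamma)\colon\ g|_V=f\ \text{a.e.\ on }V\bigr\}.
$$
The set $\mathcal{G}$ is a closed linear subspace of the Banach space $W^{1,1}(V,\gamma)\times BV(\gamma)$: linearity is clear, and closedness follows because $W^{1,1}(V,\gamma)$-convergence and $BV(\gamma)$-convergence both force $L^1$-convergence, hence a.e.\ convergence along a subsequence, so the relation $g|_V=f$ is preserved in the limit. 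The first-coordinate projection $\pi_1\colon\mathcal{G}\to W^{1,1}(V,\gamma)$ is bounded and, by hypothesis, surjective. By the open mapping theorem it is open, so there is $c>0$ and, for each $f$, a pair $(f,g_f)\in\mathcal{G}$ with $\|f\|_{1,1,V}+\|g_f\|_{1,1}\le c^{-1}\|f\|_{1,1,V}$, whence $\|g_f\|_{1,1}\le C\|f\|_{1,1,V}$ with $C=1/c$. The same graph argument of course also covers the case $p>1$ uniformly, with $BV(\gamma)$ replaced by $W^{p,1}(\gamma)$.

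The main (and essentially only) obstacle is verifying that the relevant operator, respectively the graph $\mathcal{G}$, lives between genuine Banach spaces and that the restriction/projection map is continuous; once that is in place the conclusion is immediate from the open mapping theorem. I would take a sentence to record that restriction $W^{p,1}(\gamma)\to W^{p,1}(V,\gamma)$ is norm-nonincreasing and that it does land a function of $W^{p,1}(V,\gamma)$ (as opposed to $D^{p,1}(V,\gamma)$) — this uses that a global Sobolev function, when cut down to $V$, is still a limit of cylindrical functions in the $V$-norm, which one gets by mollifying/truncating along the finite-dimensional sections; alternatively one can simply work with $D^{p,1}(V,\gamma)$, which is also Banach, and note that the extension hypothesis together with $W^{p,1}(V,\gamma)\subset D^{p,1}(V,\gamma)$ still yields the bound on $\|g_f\|_{p,1}$ in terms of $\|f\|_{p,1,V}$. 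Either way, no new estimates beyond the open mapping theorem are required.
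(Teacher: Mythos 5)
Your proof is correct, and it takes a genuinely different (and shorter) route than the paper's. The paper argues by hand: for each $f$ it first produces an extension of \emph{minimal} norm (using reflexivity of $W^{p,1}(\gamma)$ for $p>1$, and the Komlos theorem together with the fact that a.e.\ limits of norm-bounded $BV$ sequences stay in $BV(\gamma)$ for $p=1$), uses this to show that the sets $S_N=\{f\colon C(f)\le N\}$ are closed, and then applies Baire's theorem followed by the usual translation trick. You let the open mapping theorem do all of this at once, applied either to the restriction operator $R\colon W^{p,1}(\gamma)\to W^{p,1}(V,\gamma)$ or, more robustly, to the first projection from the closed graph $\mathcal{G}$ of the extension relation. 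Both arguments rest on exactly the same foundational inputs (completeness of $W^{p,1}(\gamma)$, of $W^{p,1}(V,\gamma)$ --- asserted at the end of Section~1 --- and of $BV(\gamma)$), but yours dispenses entirely with the weak-compactness/Komlos selection step; what the paper's version buys in exchange is the canonical minimal-norm extension and the set $E(f)$, which however are not used later. Two small remarks. First, the fact that $R$ lands in $W^{p,1}(V,\gamma)$ rather than merely in $D^{p,1}(V,\gamma)$ needs no mollification: if $f_j\in\mathcal{F}\mathcal{C}_b^\infty$ converge to $g$ in $\|\cdot\|_{p,1}$, then the restrictions $f_j|_V$ are Cauchy in $\|\cdot\|_{p,1,V}$ and converge to $g|_V$, which therefore lies in the completion; better yet, your graph version avoids this verification altogether (and is preferable to the $D^{p,1}(V,\gamma)$ variant you sketch, since $R$ is not known to be surjective onto that larger space). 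Second, you slightly misread the $p=1$ case: the hypothesis there is still that extensions exist in $W^{1,1}(\gamma)$ --- only the \emph{conclusion} is stated in $BV(\gamma)$ --- so your first argument applies verbatim for $p=1$ and in fact yields the formally stronger conclusion $g_f\in W^{1,1}(\gamma)$ with $\|g_f\|_{1,1}\le C\|f\|_{1,1,V}$; the paper's weaker formulation is forced by its method (a Komlos limit of $W^{1,1}$ extensions is only known to lie in $BV(\gamma)$), not by the statement, and only the uniform bound is needed in the proof of Theorem~2.4.
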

\begin{proof}
For every $f\in W^{p,1}(V,\gamma)$ we denote by $C(f)$ the infimum
of numbers $C\ge 0$ for which $f$ has an extension $g$ with
$\|g\|_{p,1}\le C\|f\|_{p,1,V}$. We observe that there exists an
extension with $C=C(f)$. Indeed, take an extension $g_n$ for which
$$
\|g_n\|_{p,1}\le (C(f)+n^{-1})\|f\|_{p,1,V} .
$$
If $p>1$, then the space $W^{p,1}(\gamma)$ is reflexive, hence one
can  pass to a subsequence in~$\{g_n\}$ with the arithmetic means
convergent in $W^{p,1}(\gamma)$, which gives the desired
extension.

If $p=1$, then by the Komlos theorem  (see \cite[p.~290]{B07}) one can find a
subsequence with the arithmetic means convergent almost
everywhere. This gives an extension in the class $BV(\gamma)$,
since if functions $f_j$ are uniformly bounded in $BV(\gamma)$ and
converge a.e. to a function~$f$, then $f\in BV(\gamma)$ (see \cite{FH}).

Thus, for every $f\in W^{p,1}(V,\gamma)$ we obtain the set $E(f)$
of all extensions with the minimal possible norm. It is clear
 that this set is convex and closed in $W^{p,1}(\gamma)$.
If $p>1$, then  it consists of a single point because of the
strict convexity of the norm in~$L^p$.

For any natural number $N$ let
$$
S_N=\bigl\{f\in W^{p,1}(V,\gamma)\colon \ C(f)\le N\bigr\}.
$$
 It is easy to see that $S_N$
is a closed set. By Baire's theorem, there
is   $N$ such that   $S_N$  contains  a closed ball $B(f_0,r)$
in $W^{p,1}(V,\gamma)$ of radius $r>0$ centered at~$f_0$.
Then the
ball $B(0,r)$ is contained in $S_{N_1}$, where $N_1=Nr^{-1}(2\|f_0\|_{p,1,V}+r)$.
Indeed, if $\|f\|_{p,1,V}=r$, then $f$ has an extension $g_{f+f_0}-g_{f_0}$
with the norm estimated by
$$
N(r+\|f_0\|_{p,1,V})+N \|f_0\|_{p,1,V}=N_1r.
$$
Hence the required estimate holds for any $f\in B(0,r)$,
whence the assertion of the lemma  follows.
\end{proof}

The main negative results about extensions are the following two theorems.
For notational simplicity, in the first one we consider the Gaussian product-measure
on~$\mathbb{R}^\infty$.

\begin{theorem}\label{t2.4}
The space $\mathbb{R}^\infty$ contains a convex Borel $H$-open
set~$K$ of positive $\gamma$-measure with the following
property{\rm:} for every $p\in [1,+\infty)$ there is a function in
the class $W^{p,1}(K,\gamma)$ having no extensions to a function
of class~$W^{p,1}(\gamma)$. One can also find a convex compact
set $K$ with the same property.
\end{theorem}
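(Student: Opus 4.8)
The plan is to construct $K$ as a convex set whose $H$-sections along the coordinate directions shrink to zero at a controlled rate, so that a function which is harmless on $K$ (small Sobolev norm, in fact a nice bounded Lipschitz-type function on each slice) is forced to have enormous gradient on any extension because the extension must "bridge" across the thin slices of the complement. Concretely, I would take $K$ to be of the form $K = \{x \in \mathbb{R}^\infty : \widehat{h}_j(x) \in I_j \text{ for all } j\}$ for a suitable sequence of (measurable linear) functionals $\widehat{h}_j$ with $|h_j|_H = 1$ and a sequence of intervals $I_j$ whose lengths tend to zero fast, adjusted so that $\gamma(K) > 0$ — this forces $\sum_j (\text{something like }\gamma(\widehat{h}_j \notin I_j)) < \infty$, which is why the intervals can be taken symmetric around a drifting center or simply of the form $(-\infty, a_j)$ with $a_j \to -\infty$ slowly enough. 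The key is that $K$ is genuinely convex (an intersection of half-spaces / slabs is convex), Borel, and $H$-open provided the slab widths, measured in $H$, do not close up; one checks $H$-openness exactly as in the ellipsoid example in the excerpt, using that a fixed $h \in H$ perturbs only finitely many of the relevant quantities appreciably while the tail is controlled by $\sum_j |(\cdot,h_j)_H|$ type estimates.

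Next I would exhibit the bad function. On each coordinate line the section $K_{x,e_n}$ is an interval, and I would build $f$ so that on $K$ it behaves like a bounded function whose gradient lives essentially in the "far" directions where $K$ is thin. The natural candidate, following the announcement in \cite{BSh}, is something like $f(x) = \sum_j c_j \theta(\widehat{h}_j(x))$ or a single cleverly chosen function, where $\theta$ is a smooth bounded function and the coefficients $c_j$ are chosen so that $f \in W^{p,1}(K,\gamma)$ for all $p$ (which needs $\sum_j c_j^2 < \infty$ after weighting by the Gaussian mass of the slabs, plus $L^p$-integrability, all easy since everything is bounded on $K$ and we only integrate over $K$) but such that any $W^{p,1}(\gamma)$ extension $g$ would have to satisfy an incompatible integral identity. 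The mechanism: by Lemma~\ref{lem2.1}, if extensions existed for all $f \in W^{p,1}(K,\gamma)$, there would be a \emph{bounded} extension operator; so it suffices to violate the uniform bound. One produces a sequence $f_m \in W^{p,1}(K,\gamma)$ with $\|f_m\|_{p,1,K}$ bounded but any extension having $W^{p,1}(\gamma)$-norm $\to \infty$. To see the blow-up, use the integration-by-parts identity defining $\partial_{e_n} g$ (or the $\widehat h$-version, noting $\|\widehat{h}g\|_{L^1} \le C\|g\|_{1,1}$ from the excerpt): test $g$ against functions supported near the thin complementary slabs; the oscillation of $g$ forced between the two sides of a slab of $H$-width $\delta_j$ contributes $\sim 1/\delta_j$ to the gradient, and summing a divergent series of such forced contributions gives the contradiction.

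For the compact version, I would intersect $K$ with a ball in an appropriate Hilbert space that carries $\gamma$ (passing through the Tsirelson isomorphism, as the excerpt allows), or more simply replace each slab condition by a two-sided bounded condition $\widehat{h}_j(x) \in [-R_j, R_j]$ with $R_j \to \infty$ fast enough that $\gamma(K) > 0$ and slowly enough that $K$ is compact in the relevant Fréchet/Hilbert topology (a product of bounded intervals with summable-square half-lengths is compact in $\ell^2$). The bad function and the blow-up argument are unaffected since they are local to the thin coordinate directions, which I keep the same; one only rechecks convexity (preserved under adding bounded slabs), $H$-openness (now one must ensure the two-sided widths stay bounded below in $H$), and positivity of measure. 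The main obstacle I anticipate is the delicate \emph{simultaneous} arrangement of three competing constraints on the sequences $(h_j), (I_j)$ (or $(R_j)$): $\gamma(K) > 0$, $H$-openness, and a fast-enough closing rate of the slabs to force gradient blow-up — together with getting honest, self-contained estimates for the forced oscillation of an extension across a thin convex slab in the Gaussian setting (this is where \cite{BSh} apparently had gaps). Making the quantitative Poincaré/trace-type estimate on each slab explicit, and then summing, is the technical heart; everything else (convexity, Borel measurability, $L^p$-bounds on $K$) is routine.
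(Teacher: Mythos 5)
Your reduction via Lemma~\ref{lem2.1} to disproving a uniform bound on the extension constant is indeed the paper's first step, but the geometric construction you propose cannot be carried out, for two independent reasons. First, the measure constraint destroys the ``thin slab'' geometry: for $|h_j|_H=1$ the functional $\widehat{h}_j$ is a standard Gaussian random variable, so $\gamma(K)\le\gamma(\widehat{h}_j\in I_j)\le |I_j|/\sqrt{2\pi}$ for \emph{every} $j$, and hence $|I_j|\to0$ forces $\gamma(K)=0$ (your alternative $I_j=(-\infty,a_j)$ with $a_j\to-\infty$ fails for the same reason, and with $a_j\to+\infty$ the sections are not thin at all). The three constraints you flag as ``delicate'' are in fact mutually incompatible. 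Second, and more fundamentally, the blow-up mechanism you invoke --- forced oscillation of an extension across a thin piece of the complement --- is vacuous for a convex $K$: any two points of $K$ are joined by a segment lying in $K$, so no bridging occurs, and by the Proposition on extensions proved just before Lemma~\ref{lem2.1} every function on an $H$-convex set with \emph{bounded} $H$-gradient does extend to a function in $W^{p,1}(\gamma)$ for all $p<\infty$. Your candidate $f=\sum_j c_j\theta(\widehat{h}_j)$ with $\theta$ smooth and bounded and $\sum_jc_j^2<\infty$ satisfies $|D_Hf|_H\le\|\theta'\|_\infty\bigl(\sum_jc_j^2\bigr)^{1/2}$ and therefore cannot be a counterexample.

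The obstruction actually used is of a different, isoperimetric nature and is two-dimensional at its core. One takes the rhomb $K_m\subset\mathbb{R}^2$ with vertices $(\pm m^2,0)$, $(0,\pm m)$ and the function $f_m(x)=\max(1-m^2|x-a|,0)$ concentrated near the far vertex $a=(m^2,0)$, where $K_m$ is a wedge of angle $\sim 1/m$: the level sets of $f_m$ inside $K_m$ are arcs of length $\sim m^{-3}$, whereas the level sets of any extension $g$ in the disc of radius $2m^{-2}$ about $a$ must have length at least of order $m^{-2}$; the coarea formula then gives $\|\nabla g\|_{L^1(U)}\ge c\,m\,\|\nabla f_m\|_{L^1(K_m)}$, and H\"older's inequality converts this into $C(f_m)\ge c\,m^{1/p}$. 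The infinite-dimensional set is the product $\prod_m K_m$ in $(\mathbb{R}^2)^\infty$ intersected with a full-measure weighted-$\ell^2$ subspace (to gain convexity together with $H$-openness, and relative compactness for the second assertion), and Fubini applied to the $m$-th planar factor transfers the lower bound $C(f_m)\ge c\,m^{1/p}$ to the infinite-dimensional setting, contradicting Lemma~\ref{lem2.1}; an explicit bad function is $\sum_mC_mf_m(x_m)$ for suitable $C_m$. To salvage your plan you would have to replace slabs by domains with such narrow corners and replace the oscillation argument by a perimeter lower bound for level sets of the extension; the function producing the contradiction must have \emph{unbounded} gradient concentrated where the domain is thin.
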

\begin{proof}
Let us fix a natural number $m$ and consider  the open rhomb
$K_m$ in the plane with vertices at the points $a=(m^2,0)$,
$b=(0,m)$, $c=-a$ and $d=-b$. The function $f_m(x)=\max
(1-m^2|x-a|,0)$, where $|x|$ is the usual norm on the plane, is
Lipschitzian. Let us estimate its norm in the space
$W^{p,1}(K_m,\gamma)$, where $\gamma$ is the standard Gaussian
measure on the plane with density $\varrho$; in what follows in the notation
like $\|f\|_{p,1,K_m}$ we always mean Sobolev norms of functions on $K_m$
with respect to the standard Gaussian measure.
To this end we
observe that whenever $|x-y|\le \min(1,1/|x|)$ the inequality
\begin{equation}\label{e1}
c_1\le\varrho(x)/\varrho(y)\le c_2, \quad c_1=e^{-1}, c_2=e^{3/2}
\end{equation}
holds.
In the subsequent estimates we denote by $c_3, c_4,\ldots$ some universal constants.
We have
$$
|\nabla f_m(x)|=m^2 \quad\hbox{if $|x-a|< m^{-2}$,} \quad |\nabla
f_m(x)|=0 \quad \hbox{if $|x-a|> m^{-2}$.}
$$
The domain in $K_m$, where $f_m$ does not vanish, is a sector
with vertex~$a$ and the angle with tangent $1/m$. Hence  by
(\ref{e1}) there hold the inequalities
\begin{equation}\label{e2}
c_3^{-1} \varrho(a)^{1/p}m^{2-5/p}\le
 \|f_m\|_{p,1,K_m}\le c_3 \varrho(a)^{1/p}m^{2-5/p}.
\end{equation}

Now observe that if a
locally Sobolev function $g$ is an extension of $f_m$ to the plane
(or in the case $p=1$ the extension belongs to $BV(\gamma)$), then
already for the open disc  $U$ with center $a$ and radius
$2m^{-2}$ there holds the estimate
\begin{equation}\label{e3}
\|g\|_{1,1,U}\ge c_4 m \|f_m\|_{1,1,K_m}\ge c_5 \varrho(a)m^{-2}.
\end{equation}
Indeed, by
inequality (\ref{e1}) it suffices to obtain such an estimate for
the usual Sobolev norms with respect to  Lebesgue measure without
Gaussian weight.
Moreover, since $|\nabla f_m(x)|\ge f_m(x)$, $x\in K_m$, it is enough to obtain an estimate
of this sort just for the $L^p$-norms of the gradients (in place of the full Sobolev norm).
It is known (see \cite[\S 5.5]{EG} or \cite[\S 6.1]{M}) that for
every Sobolev function $g$ on $U$ one has the equality
$$
\|\nabla g\|_{L^1(U)}=\int_{-\infty}^{+\infty} P(E_t)\, dt,
$$
where
$$
E_t=\{x\in U\colon\, g(x)>t\}
$$
and $P(E_t)$ is the perimeter
of the set~$E_t$, which for almost all $t$ equals $H_{1}(U\cap
\partial E_t)$, where $H_{1}$ is the one-dimensional Hausdorff
measure (the length) and $\partial E_t$ is the boundary of the
set~$E_t$. An analogous equality is true for the function $f_m$ on
$K_m$. For a function $g\in BV(U)$ the indicated equality has the
form
$$
\|Dg\|(U)=\int_{-\infty}^{+\infty} P(E_t)\, dt,
$$
where $\|Dg\|(U)$ is the value of the total variation of the
vector measure $Dg$ (the generalized gradient of~$g$) on the
set~$U$. One can replace $g$ by the function $\min(1,\max(g,0))$
with values in $[0,1]$,  which extends $f_m$ and whose Sobolev norm
does not exceed that of~$f$. Hence we can consider only the values
$t\in [0,1]$. Now it suffices to verify that for the sets
$S_t=\{x\in K_m\colon\, f_m(x)>t\}$ we have the estimate
$$
H_{1}(U\cap \partial E_t)\ge c m H_{1}(K_m \cap \partial S_t)
$$
with a universal  constant~$c$.  The set $K_m\cap \partial S_t$ is
an arc of the circle of radius $m^{-2}(1-t)$ centered at~$a$. If
the set $\partial E_t$ oversteps the limits of~$U$, then the
length of $U\cap \partial E_t$ is not less than~$m^{-2}$ and the
length of the arc
 $K_m\cap \partial S_t$ is not greater than~$cm^{-3}$. If the
set $\partial E_t$ is entirely contained in~$U$, then again its length
is not less than $cm H_1 (K_m\cap \partial S_t)$, since $E_t$
contains the whole sector $S_t$.

From estimates (\ref{e2}) and (\ref{e3}) we obtain
$$
\frac{\|g\|_{1,1,U}}{\|f_m\|_{p,1,K_m}}\ge c_6 \varrho(a)^{1-1/p}m^{5/p-4}.
$$
 By H\"older's inequality
$$
\|g\|_{1,1,U} \le
 \|g\|_{p,1,U}\gamma(U)^{1/q} \le c^{-1}_7\varrho(a)^{1/q}m^{-4/q} \|g\|_{p,1,U}, \quad q=p/(p-1),
$$
hence
$$
\frac{\|g\|_{p,1,U}}{\|g\|_{1,1,U}}\ge c_7 \varrho(a)^{1/p-1} m^{4-4/p}.
$$
 Therefore, we finally obtain
$$
\frac{\|g\|_{p,1,U}}{\|f_m\|_{p,1,K_m}}\ge c m^{1/p}
$$
with some constant $c>0$.

Now in the infinite-dimensional case
we regard $\mathbb{R}^\infty$ as the countable product of the plane~$\mathbb{R}^2$
(so the elements of $\mathbb{R}^\infty$ are sequences $x=(x_n)$ with $x_n\in \mathbb{R}^2$,
$|x_n|$ is the usual norm in the plane)
equipped with the countable power $\gamma$ of the standard Gaussian measure on the plane.

We take for $K$ the
intersection of the product  $\prod_{m=1}^\infty K_m$ with the
linear subspace $L$ consisting of all elements $x=(x_m)$ such that
$$
\sum_{m=1}^\infty m^{-2}|x_m|^2<\infty
$$
 and having full measure; the equality $\gamma(L)=1$ follows from the fact that the integral
of~$x_m^2$ equals~$1$. This subspace is Hilbert with respect to
the norm
$$
\|x\|_L=\Bigl(\sum_{m=1}^\infty
m^{-2}|x_m|^2\Bigr)^{1/2}.
$$

In order to verify that $K$ is $H$-open it suffices to show that
$K$ is open in the Hilbert space~$L$, which is done as follows.
Let $x=(x_m)\in K$. Suppose that for every $n$ there is an element
$h^n=(h^n_m)\in L$ for which $\|h^n\|_L<1/n$ and $x+h^n\not\in K$.
Since $x\in L$, we have $|x_m|\le m/4$ for all~$m$, starting from
some number $m_1$. Then $x_m+z\in K_m$ if $m\ge m_1$ and $|z|\le
m/4$, since $K_m$ contains the ball of radius~$m$. Hence
$x_m+h^n_m\in K_m$ for all~$m$ and sufficiently large~$n$, whence
$x+h^n\in K$, which is a contradiction. It is clear that
$\gamma(K)>0$, since the Gaussian measures of the sets $K_m$ are
rapidly increasing to~$1$.

Every  function $f_m$ constructed above on the plane generates a
function (denoted by the same symbol) on the space
$\mathbb{R}^\infty$, identified with the countable power of the
plane, which acts by the formula $f_m(x)=f_m(x_m)$. The measure
$\gamma$ on $\mathbb{R}^\infty$ is also considered as the
countable power of the standard Gaussian measure on the plane. By
Lemma~\ref{lem2.1} it suffices to show that $C(f_m)\ge c m^{1/p}$. It
remains to observe that if a function $g\in W^{p,1}(\gamma)$
extends~$f_m$, then for almost every
(with respect to the corresponding product-measure)
fixed $y=(y_j)_{j\not=m}$ the
function $g_m(x_m)=g(y_1,\ldots,y_{m-1},x_m,y_{m+1},\ldots)$
of the remaining variable $x_m$ gives an extension
of the function $f_m(x_m)$ and hence one has the estimate
$$
\int |\nabla g|^p\, d\gamma\ge c_0 m  \int |\nabla f_m|^p\,
d\gamma
$$
with some constant~$c_0$, which gives the required assertion.

It is also possible to give an explicit example of a function
$f$ in $W^{p,1}(K,\gamma)$ without extensions to a function
in $W^{p,1}(\gamma)$. Take a sequence $\{C_m\}$ such that
$$
\sum_mC_m\|f_m\|_{p,1,K_m}<\infty,\quad \sup_mC_mm^{1/p}\|\nabla f_m\|_{p,K_m}=\infty,
$$
which is obviously  possible.
If $g$ is an extension of
$$f(x)=\sum_{m=1}^\infty C_mf_m(x_m),
$$
 then
\begin{multline*}
\|g\|_{p,1} \ge \sup_m\biggl(\int\int|\nabla_{x_m} g|^{p}
\,d\gamma_{m}\,d\gamma_{m}^{\bot}\biggr)^{1/p}
\\
\ge
 \sup_m\biggl(\int C_{m}^pc^p m\|\nabla f_m\|_{p,K_m}\,d\gamma_{m}^{\bot}\biggr)^{1/p} = \infty,
\end{multline*}
where $\gamma_{m}^{\bot}$ is the product-measure complementing $\gamma_m$ to~$\gamma$.
Thus, $f$ possesses the desired property.
\end{proof}

\begin{remark}
{\rm
The set $\prod_{m=1}^\infty K_m$ is not $H$-open.
It suffices to consider its point $x=(x_m)$ with $x_m=(m^2-2^{-m},0)$.
}\end{remark}

\begin{remark}
{\rm Passing to the restriction of the measure $\gamma$ to the
Hilbert space~$L$, we obtain a convex and open in $L$  set $K$ of
positive measure, on which for every $p\in [1,+\infty)$ there is a
function in the class $W^{p,1}(K,\gamma)$ without restrictions to
a function in $W^{p,1}(\gamma)$. It is clear that the same example
can be realized also on a larger weighted Hilbert space of
sequences, in which $K$ will be precompact. Hence it is possible to
combine $H$-openness of $K$ with its relative compactness in a Hilbert space (clearly,
our set $K$ in $\mathbb{R}^\infty$ is relatively compact).

In addition, if we take for $\gamma$ the classical Wiener measure on the space
$C[0,1]$ or $L^2[0,1]$ and embed this space into
$\mathbb{R}^\infty$ by means of the mapping $x\mapsto n (x,e_n)_{L^2}$,
where $\{e_n\}$ is the orthonormal basis in $L^2[0,1]$ formed by the eigenfunctions
of the covariance operator of the Wiener measure,
that is, $e_n(t)=c_n\sin ((\pi n-\pi/2)t)$,
$n\in\mathbb{N}$, with the eigenvalues $\lambda_n=(\pi n-\pi/2)^{-2}$,
then the image of $\gamma$ will coincide with the standard Gaussian product-measure and
the space $L$ described above will
coincide with~$L^2[0,1]$ itself
(more precisely, with the image of $L^2[0,1]$ under the embedding),
hence our convex set $K$ will be open in the corresponding space.

For any centered Radon Gaussian measure $\gamma$ on a
locally convex space~$X$ with the infinite-dimensional
Cameron--Martin  space~$H$, the results proved above yield
existence of an $H$-open convex Borel set $V$ of positive
$\gamma$-measure and, for every $p\in [1,+\infty)$, a function
$f$ in $W^{p,1}(V,\gamma)$ without extensions to functions in the
class $W^{p,1}(\gamma)$. It would be interesting to construct an
example of a function in the intersection of all
$W^{p,1}(V,\gamma)$ without extensions of class
$W^{1,1}(\gamma)$. Apparently, there are bounded functions with such a property.
}\end{remark}

Certainly, it is natural to ask about such examples on a ball in a Hilbert space.
However, we have no such examples.

\section{Classes $BV$ on infinite-dimensional domains with differentiable measures}

In the paper \cite{BR} certain classes of
functions of bounded variation on infinite-dimensional
spaces with non-Gaussian differentiable measures have been considered. Earlier such classes in the case of
Gaussian measures were studied in the papers  \cite{FH}, \cite{H}, \cite{A}.
 As in \cite{BR},
here we consider the two types of functions of bounded variation corresponding
to vector measures of bounded
variation or bounded semivariation arising as derivatives.
However, some other nuances also appear due to boundary effects.

We recall that on a domain $U$ in $\mathbb{R}^d$ with  Lebesgue measure the
 class $BV(U)$ is defined as the set of all functions~$f$ integrable in $U$ such that the generalized
gradient $Df$ is a vector measure on $U$
of bounded variation. Here we identify almost everywhere equal functions;
 the class $BV(U)$ turns out to be a Banach space with norm
 $\|f\|_{BV}=\|f\|_{L^1(U)}+{\rm Var}\, (Df)$.
On the real line with the standard Gaussian measure $\mu=\varrho dx$,
$\varrho(x)=(2\pi)^{-1/2}\exp(-x^2/2)$,  the class $BV(\mu)$
consists of all functions $f\in L^1(\mu)$ such that the function $xf(x)$ also belongs to
$L^1(\mu)$ and the generalized derivative of the function $f\varrho$ is a bounded Borel measure
on the real line. In this case one can introduce a bounded measure
$\Lambda f$ on the real line for which the measure $(f\varrho)'$
is the sum of $\Lambda f$ and the measure with density $-x f(x)\varrho(x)$ with respect to Lebesgue
measure. In \cite{BR}, certain natural analogs of these classes for measures on infinite-dimensional
spaces have been introduced. The principal feature of the  infinite-dimensional case
is connected with the fact  that here one can consider
vector measures of bounded variation as well as more general vector measures of
bounded semivariation. Passing to domains we face an additional problem of
defining generalized derivatives, since here domains are understood not in
the topological sense any more. The latter is explained by that fact that many typical
convex sets of positive measure have no inner points.

A measure $\mu$ on $X$ is called differentiable along a vector~$h$
in the sense of Skorohod if there exists a  measure $d_h\mu$, called the Skorohod
derivative of the measure $\mu$ along the vector $h$, such that
$$
   \lim_{t \to 0} \int_X \frac{f(x - th) - f(x)}{t} \, \mu(dx) =
  \int_X f(x) d_h\mu(dx)
$$
for every bounded continuous function $f$ on $X$. It is worth noting that even if $\mu$
is nonnegative (as is the case below), its derivative $d_h\mu$ is always a signed measure: $d_h\mu(X)=0$,
which follows by taking $f=1$.

If the measure $d_h\mu$ is absolutely continuous with respect to the measure~$\mu$, then
 $\mu$ is called Fomin differentiable along the vector $h$, the  Radon--Nikodym density of the
measure $d_h\mu$ with respect to $\mu$ is denoted by $\beta_h^\mu$ and
called the logarithmic derivative of $\mu$ along $h$.
The Skorohod differentiability of $\mu$ along $h$ is equivalent to the identity
$$
\int_X \partial_h f(x)\, \mu(dx)=-\int_X f(x)\, d_h\mu(dx), \quad f\in \mathcal{F}\mathcal{C}^\infty,
$$
where $\partial_h f(x)=\lim\limits_{t\to 0} (f(x+th)-f(x))/t$.
On the real line the Fomin differentiability is equivalent to the membership of the density in
the Sobolev class $W^{1,1}$ and the Skorohod differentiability is the boundedness of
variation of the density (and the Skorohod derivative is just the derivative
of the density in the sense of generalized functions).
On differentiable measures, see \cite{B10}, \cite{B97}, \cite{BS}.

For example, if $\mu$ is a Gaussian measure,
then it is Fomin differentiable along all vectors in its Cameron--Martin
space $H(\mu)$.

It will be useful below that the collection $D_C(\mu)$ of all vectors of Skorohod
differentiability of a nonzero measure $\mu$ is a Banach space with respect to the
norm $h\mapsto \|d_h\mu\|$.
The set $D(\mu)$ of all vectors of Fomin differentiability
is its closed linear subspace with the same norm;
on these results, see \cite[Chapter~5]{B10}, \cite{B85}, \cite{BS}.
It will be useful below and follows by the closed graph theorem that, for any
Hilbert space $H$ continuously embedded into $X$ and contained in~$H$,
 the natural embedding $H\to D_C(\mu)$
is continuous with respect to the indicated norm on $D_C(\mu)$ (and the given norm on~$H$).

We recall some concepts related to conditional measures (see \cite{B07}).
Suppose we are given a  measure $\nu$ (possibly signed)
on $X$ and a vector $h\in X$. Let us choose a
closed hyperplane $Y\subset X$ for which $X=Y\oplus \mathbb{R}h$,
 and denote  by $|\nu|_Y$ the projection of $|\nu|$ on~$Y$ under the natural projecting
$\pi\colon\, X\to Y$. Then one can find measures $\nu^{y,h}$ on the straight
 lines $y+\mathbb{R}h$ (but regarded also as measures on the whole space~$X$), $y\in Y$,
 called conditional measures,
 for which the equality $\nu=\nu^{y,h}|\nu|_Y(dy)$ holds in the sense of the identity
$$
\nu(B)=\int_Y \nu^{y,h}(B)\, |\nu|_Y(dy)
$$
for every Borel set $B$. In terms of the measure $|\nu|$ itself  this can be written as
$$
\nu(B)=\int_X \nu^{\pi x,h}(B)\, |\nu|(dx).
$$
We can also assume that we are given conditional measures $\nu^{x,h}$
on the straight lines $x+\mathbb{R}h$ such that $\nu^{x,h}=\nu^{\pi x,h}$.
If the measure $\nu^{x,h}$ has a density with respect to the natural Lebesgue measure  on $x+\mathbb{R} h$
induced by the mapping $t\mapsto x+th$, then this density is called conditional and is denoted
by $\varrho_\nu^{x,h}$. It is important to note that in place of the measure $|\nu|_Y$ on $Y$
one can use any nonnegative measure $\sigma$ with respect to which the measure $|\nu|_Y$
is absolutely continuous. If $|\nu|_Y=g\sigma$, then we obtain the representation
$\nu=\nu^{y,h,\sigma} \, \sigma(dy)$, where $\nu^{y,h,\sigma}:=g(y)\nu^{y,h}$.
Such a representation is  called a disintegration of the measure.
This may be convenient for a simultaneous disintegration of several measures.
As above, if $\sigma$ is given on~$X$  and $\nu\ll\sigma$, then we can write $\nu=\nu^{x,h,\sigma}\, \sigma(dx)$.

For a Gaussian measure, the conditional measures are Gaussian as well
(see \cite[Section~3.10]{B}).

It is known (see \cite{B10}) that if $\mu$ is Skorohod or Fomin
differentiable along $h$,
then there exist conditional measures $\mu^{y,h}$ differentiable along $h$ in the same sense
and
$$
d_h\mu= d_h\mu^{y,h} |\mu|_Y(dy);
$$
 if in place of $|\mu|_Y$ we use a measure
$\sigma\ge 0$ on $Y$ with $|\mu|_Y\ll \sigma$, then we obtain
$d_h\mu= d_h\mu^{y,h,\sigma}\, \sigma(dy)$.
It follows from this that the projection of the  measure $|d_h\mu|$ on $Y$ is
absolutely continuous with respect to the projection of the
measure $\mu$, although the Skorohod derivative $d_h\mu$ itself can be singular with respect to~$\mu$
(it is  easy to construct examples on the plane).

Let us consider a Radon probability  measure
$\mu$ (not necessarily Gaussian)
on a real locally convex space $X$ with the topological dual space~$X^{*}$.
We assume that  $X$ contains a continuously and densely embedded separable Hilbert
space $H$. This embedding generates an embedding $X^{*}\to H$, since
every functional $l\in X^{*}$ defines a vector $j_H(l)\in H$ by the formula
$l(k)=(j_H(l),k)_H$, $k\in H$.
The norm in $H$ will be denoted by the symbol $|\,\cdot\,|_H$.
For example, in the case of a Gaussian measure $\mu$ one can take for $H$
the Cameron--Martin space of~$\mu$ (but other choices are possible).
A~model example is $\mathbb{R}^\infty$ with $H=l^2$ (here $X^*$ is the space of all
sequences with finitely many nonzero coordinates).

We assume below the measure $\mu$ is Fomin
differentiable along all vectors in~$H$ and that
for every fixed $h\in H$ the continuous versions of the conditional densities
on the straight lines $x+\mathbb{R}h$ are positive.
Below these densities are denoted by $\varrho^{x,h}$ without indicating~$\mu$.

We recall the definitions of the variation and semivariation of vector measures
(see \cite{DS} or \cite{DU}).
A vector  measure with values in a separable Hilbert space $H$
is an $H$-valued countably additive function $\eta$ defined on a  $\sigma$-algebra
$\mathcal{A}$ of subsets of a  space~$\Omega$.
Such a  measure automatically has finite semivariation defined by the formula
$$
   V(\eta) := \sup\Bigl|\sum_{i=1}^n \alpha_i \eta(\Omega_i)\Bigr|_H,
$$
where sup is taken over all finite partitions of $\Omega$ into disjoint
parts $\Omega_i \in \mathcal{A}$ and all finite collections of real numbers $\alpha_i$ with
$|\alpha_i|\le 1$. In other words, this is the supremum of the variations
of the scalar measures $(\eta, h)_H$
over $h\in H$ with $|h|_H\le 1$. However, this does not mean that the
variation of the vector  measure $\eta$ is finite, which is defined as
$$
   {\rm Var}(\eta): = \sup \sum_{i=1}^n |\eta(\Omega_i)|_H,
$$
where sup is taken over all finite partitions of $\Omega$ into disjoint
parts $\Omega_i \in \mathcal{A}$. The variation of a measure $\eta$ will be denoted by $\|\eta\|$
(but in \cite{DS} this notation is used for the semivariation).
It is easy to give an example of a measure with values
in an infinite-dimensional Hilbert space having bounded semivariation and
infinite variation: take $\sum_{n=1}^\infty n^{-1}\delta(e_n)e_n$,
where $\{e_n\}$ is an orthonormal basis and $\delta(e_n)$ is Dirac's measure at~$e_n$.

Let us recall a definition from \cite{BR}.

\begin{definition}
The class $SBV_H(\mu)$  consists of all functions $f\in L^1(\mu)$ for which
$$\sup_{|h|\le 1} |f\beta_h|_{L^1(\mu)}<\infty$$
and there exists an $H$-valued measure $\Lambda f$ of bounded semivariation such that
the Skorohod derivative  $d_h(f\mu)$ exists and equals $(\Lambda f,h)_H+ f \beta_h\mu$
for each $h\in H$.

The subclass  $BV_H(\mu)\subset SBV(\mu)$ consists of all functions $f$ for which $\Lambda f$ has
bounded variation.
\end{definition}

For example, if $f$ belongs to the Sobolev class $W^{1,1}(\mu)$, where $\mu$ is a Gaussian measure,
then $f\in BV_H(\mu)$; here for the measure $\Lambda f$ we can take the measure with the vector
density $D_H f$ with respect to~$\mu$. Even for a Gaussian measure $\mu$ (with an infinite-dimensional
support) the class $SBV_H(\mu)$ is strictly larger than $BV_H(\mu)$, see an example
in \cite{BR}.

It is important to note that the measure $(\Lambda f,h)_H$ can be singular with respect to~$\mu$
(say, have atoms in the one-dimensional case), but it also admits
a disintegration
$$
(\Lambda f,h)_H= (\Lambda f,h)_H^{y,h,\mu_Y}\, \mu_Y(dy)
$$
with some measures $(\Lambda f,h)_H^{y,h,\mu_Y}$ on the straight
lines $y+\mathbb{R}h$, where $y\in Y$ and $Y$~is a closed hyperplane complementing $\mathbb{R}h$.
Indeed, we have
$$
(\Lambda f,h)_H=d_h(f\mu) - f \beta_h\mu,
$$
 where the projections $|d_h(f\mu)|$ and $|f \beta_h|\mu$
on $Y$ are absolutely continuous  with respect to
the projection of $\mu$, since, as noted above,
the projection of the measure $|d_h(f\mu)|$ is absolutely continuous with respect to
the projection of $|f|\mu$.

For an interval $J$ on the real line let $BV_{loc}(J)$ denote the class of all functions on~$J$
having bounded variation on every compact interval in~$J$.

\begin{lemma}\label{lem3.1}
A function $f\in L^1(\mu)$ belongs to $SBV_H(\mu)$ precisely when there is an $H$-valued measure
$\Lambda f$ of bounded semivariation such that for every $h\in H$ for $\mu$-almost every $x$ the function
$t\mapsto f(x+th)$ belongs to $BV_{loc}(\mathbb{R})$ and its generalized derivative is
$$
(\Lambda f,h)_H^{x,h,\mu}/\varrho^{x,h}(t).
$$
A similar assertion is true for $BV_H(\mu)$,
where the measure $\Lambda$ must have bounded variation.
\end{lemma}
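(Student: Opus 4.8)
The plan is to fix a direction $h\in H$, disintegrate every measure that appears along the lines $x+\mathbb{R}h$, and thereby turn the defining identity of $SBV_H(\mu)$ into a one-dimensional statement about densities of conditional measures. First I would fix $h$, choose a closed hyperplane $Y$ with $X=Y\oplus\mathbb{R}h$, and disintegrate $\mu$, $f\mu$, $f\beta_h\mu$ and---once it is known to exist---$d_h(f\mu)$ with respect to the common projection $\mu_Y$ of $\mu$ onto $Y$; this is legitimate because the projections onto $Y$ of $|f\mu|$, $|f\beta_h|\mu$ and $|d_h(f\mu)|$ are all absolutely continuous with respect to $\mu_Y$, the last of these by the discussion preceding the statement. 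On the line through $y$ the conditional measure of $\mu$ has the strictly positive continuous density $\varrho^{y,h}$, which lies in $W^{1,1}_{loc}(\mathbb{R})$ with $(\varrho^{y,h})'=\beta_h\varrho^{y,h}$ since $\mu$ is Fomin differentiable along $h$; the conditional measure of $f\mu$ has density $u(t)\varrho^{y,h}(t)$ with $u(t):=f(y+th)$, and the conditional measure of $f\beta_h\mu$ has density $u(t)(\varrho^{y,h})'(t)$.

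For the implication from $SBV_H(\mu)$ to the section property, let $f\in SBV_H(\mu)$ with associated measure $\Lambda f$. By the quoted result on differentiation of conditional measures, the identity $d_h(f\mu)=(\Lambda f,h)_H+f\beta_h\mu$ disintegrates, for $\mu_Y$-almost every $y$, into an identity of measures on the line $y+\mathbb{R}h$ whose left-hand side is the distributional derivative of the density $u\varrho^{y,h}\in BV(\mathbb{R})$ (here one uses that on the real line Skorohod differentiability of a measure means bounded variation of its density and the Skorohod derivative is the derivative of that density). Writing $\varrho=\varrho^{y,h}$, positivity and continuity of $\varrho$ give $u=(u\varrho)/\varrho\in BV_{loc}(\mathbb{R})$, and the Leibniz rule for the product of a $BV_{loc}$ function by a positive $W^{1,1}_{loc}$ function yields $(u\varrho)'=\varrho\,u'+u\varrho'\,dt$. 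Subtracting this from the disintegrated identity and cancelling the common absolutely continuous term $u\varrho'\,dt$ leaves $\varrho\,u'=(\Lambda f,h)_H^{y,h,\mu_Y}$, i.e.\ $u'=(\Lambda f,h)_H^{x,h,\mu}/\varrho^{x,h}$ with $y=\pi x$, which is the formula in the statement; the same computation with the variation in place of the semivariation gives the assertion for $BV_H(\mu)$.

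For the converse I would run this backwards: assuming $\Lambda f$ has bounded semivariation and the section property holds, on $\mu_Y$-almost every line I set $\varrho=\varrho^{y,h}$ and $u(t)=f(y+th)$, so that $\varrho\,u'=(\Lambda f,h)_H^{y,h,\mu_Y}$ by hypothesis and the Leibniz rule gives $(u\varrho)'=(\Lambda f,h)_H^{y,h,\mu_Y}+u\varrho'\,dt$; reassembling these line measures via the disintegration exhibits $f\mu$ as Skorohod differentiable along $h$ with $d_h(f\mu)=(\Lambda f,h)_H+f\beta_h\mu$, so $f\in SBV_H(\mu)$, and likewise $f\in BV_H(\mu)$ when $\Lambda f$ has bounded variation. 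I expect the technical heart of the proof to be precisely this last step, namely the passage from the \emph{local} ($BV_{loc}$) description of the one-dimensional sections back to the \emph{global} Skorohod differentiability of $f\mu$: one must check that $f\beta_h\in L^1(\mu)$ with $\sup_{|h|_H\le1}|f\beta_h|_{L^1(\mu)}<\infty$ and that $t\mapsto u(t)\varrho^{y,h}(t)$ is of bounded variation on all of $\mathbb{R}$ with limits $0$ at $\pm\infty$, so that no boundary term arises when integrating by parts against a function of $\mathcal{F}\mathcal{C}_b^\infty$, which along the line is bounded but not compactly supported. I would establish this by a Fubini-type estimate exploiting finiteness of the scalar measures $(\Lambda f,h)_H$ (with $\mathrm{Var}\,(\Lambda f,h)_H\le V(\Lambda f)\,|h|_H$) together with the integrability of the logarithmic derivatives built into the Fomin differentiability of $\mu$, after a preliminary truncation of $f$ to $(-N)\vee f\wedge N$ and passage to the limit $N\to\infty$.
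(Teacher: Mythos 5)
Your proposal is correct and follows essentially the same route as the paper: fix $h$, disintegrate $\mu$, $f\mu$, $f\beta_h\mu$ and $d_h(f\mu)$ along the lines $x+\mathbb{R}h$ with respect to a common reference measure on the complementing hyperplane, and reduce both implications to the one-dimensional Leibniz rule $(u\varrho)'=\varrho\,u'+u\varrho'$ for a $BV_{loc}$ function times a positive locally absolutely continuous density. Your extra attention to the global (rather than merely local) bounded variation of $t\mapsto f(y+th)\varrho^{y,h}(t)$ and the absence of boundary terms is a sound elaboration of a point the paper's proof leaves implicit.
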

\begin{proof}
If the aforementioned condition is fulfilled, then the measure $(\Lambda f,h)_H+ f\beta_h\mu$
serves as the Skorohod derivative of the measure $d_h(f\mu)$. With the help of conditional measures
this is deduced from the fact that if on the real line a function
 $f$ has a locally bounded variation and a function  $g$ is locally absolutely
 continuous, then $(fg)'=g f'+ fg'$ in the sense of generalized functions.
Conversely,
if $f\in SBV_H(\mu)$, then almost all conditional measures for
$f\mu$ on the straight lines $y+\mathbb{R}h$, where $y\in Y$,
from the representation
$$
f\mu=f\mu^{y,h}\, \mu_Y(dy)
$$
 are Skorohod differentiable.
Hence almost all functions
$$
t\mapsto f(y+th)\varrho^{y,h}(t)
$$
 have bounded variation,
whence by the positivity of the conditional densities for $\mu$ we obtain that the functions
$t\mapsto f(y+th)$ have locally bounded variation.
In addition, the generalized derivative $\psi$ (i.e. $\partial_t f(y+th)$)
of such a function satisfies the
equality
$$
\partial_t (f(y+th)\varrho^{y,h}(t))=\varrho^{y,h}(t)\psi+ f(y+th)\partial_t \varrho^{y,h}(t),
$$
whence the required relationship for the derivative follows
by the equality (in the sense of generalized functions)
$$
\partial_t (f(y+th)\varrho^{y,h}(t))=(\Lambda f,h)_H^{y,h,\mu_Y}+f(y+th)\partial_t \varrho^{y,h}(t),
$$
which, in turn, follows by the equality $d_h(f\mu)=(\Lambda f,h)_H+ f \beta_h$ written in terms
of the conditional measures taken with respect to the measure $\mu_Y$ on~$Y$; here it is
 important to use a common reference measure on~$Y$. The case of $BV_H(\mu)$ is similar.
\end{proof}

Now let $U\subset X$ be a Borel set that is $H$-convex and $H$-open,
that is (see Section~1), all sets $(U-x)\cap H$ are convex and open in~$H$.
For example, this can be a set that is convex and open  in $X$.

For any $H$-convex and $H$-open set $U$ the one-dimensional sections
 $$
 U_{x,h}:=U\cap (x+\mathbb{R} h),
 $$
 are open intervals on the straight lines $x+\mathbb{R} h$. We shall often identify these intervals
 with the intervals
$$
 J_{x,h}:=\{t\in\mathbb{R}\colon\ x+th\in U\}.
 $$
 In particular, when speaking about functions on intervals $U_{x,h}$ we shall mean sometimes functions
 of the real argument on~$J_{x,h}$.

The symbol $L^p(U,\mu)$ will denote
the space of  equivalence classes of all $\mu$-measurable functions $f$ on~$U$
for which the functions $|f|^p$ are integrable with respect to the measure $\mu$ on~$U$.

Let $M_H(U,\mu)$ denote the class of all  functions $f\in L^1(U,\mu)$ such that
 $$
\|f\|_M:=\|f\|_{L^1(U,\mu)}+\sup_{|h|\le 1} \int_U |f(x)|\, |\beta_h(x)|\, \mu(dx)<\infty.
$$
The corresponding space of equivalence classes will be denoted by the same symbol.

\begin{lemma}
The set $M_H(U,\mu)$ is a Banach space with the norm
$$
\|f\|_M:=\|f\|_{L^1(U,\mu)}+ \sup_{|h|_H\le 1} \|f\beta_h^\mu\|_{L^1(U,\mu)}.
$$
\end{lemma}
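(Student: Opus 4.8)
The plan is to verify first that $\|\cdot\|_M$ is a genuine norm on the space of equivalence classes and that $M_H(U,\mu)$ is a linear subspace of $L^1(U,\mu)$, and then to prove completeness by a routine Fatou-type argument. Homogeneity is immediate; the triangle inequality follows from that in $L^1(U,\mu)$ together with the bound $\|(f+g)\beta_h^\mu\|_{L^1(U,\mu)}\le \|f\beta_h^\mu\|_{L^1(U,\mu)}+\|g\beta_h^\mu\|_{L^1(U,\mu)}$ for each fixed $h$, which survives taking the supremum over $|h|_H\le 1$; the same inequality shows closedness under addition, and closedness under scalar multiplication is trivial, so $M_H(U,\mu)$ is a linear subspace. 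Finally, if $\|f\|_M=0$, then $\|f\|_{L^1(U,\mu)}=0$, so $f=0$ as an element of the space of equivalence classes.

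For completeness I would argue as follows. Let $(f_n)$ be a Cauchy sequence in $\|\cdot\|_M$. Since $\|g\|_{L^1(U,\mu)}\le\|g\|_M$, it is Cauchy in $L^1(U,\mu)$ and hence converges there to some $f\in L^1(U,\mu)$; passing to a subsequence I may also assume $f_{n_k}\to f$ $\mu$-a.e. on $U$. Fix $\varepsilon>0$ and choose $K$ with $\|f_{n_k}-f_{n_j}\|_M<\varepsilon$ for $j,k\ge K$. Fix $k\ge K$ and a vector $h\in H$ with $|h|_H\le 1$. Since $\beta_h^\mu$ is a fixed $\mu$-measurable function, $|f_{n_k}-f_{n_j}|\,|\beta_h^\mu|\to|f_{n_k}-f|\,|\beta_h^\mu|$ $\mu$-a.e. on $U$ as $j\to\infty$, so Fatou's lemma gives
$$
\int_U |f_{n_k}-f|\,|\beta_h^\mu|\,d\mu\le\liminf_{j\to\infty}\int_U|f_{n_k}-f_{n_j}|\,|\beta_h^\mu|\,d\mu\le\liminf_{j\to\infty}\|f_{n_k}-f_{n_j}\|_M\le\varepsilon .
$$
As this bound is uniform in $h$, taking the supremum over $|h|_H\le 1$ yields $\sup_{|h|_H\le 1}\|(f_{n_k}-f)\beta_h^\mu\|_{L^1(U,\mu)}\le\varepsilon$ for every $k\ge K$.

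The conclusion then follows quickly. Applying the last estimate with $k=K$ shows that $f-f_{n_K}$ has finite $M$-norm, hence $f=f_{n_K}+(f-f_{n_K})\in M_H(U,\mu)$. For $k\ge K$ we obtain $\|f_{n_k}-f\|_M\le\|f_{n_k}-f\|_{L^1(U,\mu)}+\varepsilon$, and letting $k\to\infty$ (using the $L^1$-convergence of the subsequence) gives $\limsup_k\|f_{n_k}-f\|_M\le\varepsilon$; since $\varepsilon$ was arbitrary, $f_{n_k}\to f$ in $\|\cdot\|_M$. A Cauchy sequence possessing a norm-convergent subsequence converges to the same limit, so $f_n\to f$ in $M_H(U,\mu)$, which proves completeness. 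I do not expect a serious obstacle: the only point deserving care is the interchange of the supremum over $h$ with the passage to the limit in $n$, but since Fatou produces the bound $\varepsilon$ separately for each individual $h$, the supremum inherits it, and no measurability or separability in the variable $h$ is needed; a minor point to keep in mind is that $f\beta_h^\mu\in L^1(U,\mu)$ is not automatic but is exactly guaranteed by the defining constraint of $M_H(U,\mu)$.
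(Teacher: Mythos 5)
Your proof is correct and follows essentially the same route as the paper: completeness via $L^1$-convergence, Fatou's lemma applied for each fixed $h$ (yielding a bound uniform in $h$ that survives the supremum), and the standard Cauchy-plus-convergent-subsequence argument. The only element of the paper's proof you omit is a closed-graph-theorem observation showing that $\sup_{|h|_H\le 1}\|f\beta_h^\mu\|_{L^1(U,\mu)}$ is automatically finite once $f\beta_h^\mu\in L^1(U,\mu)$ for each individual $h$; since the finiteness of this supremum is built into the definition of $M_H(U,\mu)$ as stated, its absence is not a gap in your argument.
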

\begin{proof}
Let us observe that the operator $h\mapsto f\beta_h$ from $H$ to $L^1(U,\mu)$
is linear and has a closed graph.
Indeed, suppose that $h_n\to h$ in $H$ and $f\beta_{h_n}\to g$ in $L^1(U,\mu)$.
By the continuity of the embedding $H\to D(\mu)$ we have
 $\beta_{h_n}\to \beta_h$ in $L^1(\mu)$, whence it follows that
 $f\beta_{h_n}\to f\beta_h$
in measure on~$U$, hence $g=f\beta_h$. Therefore, for every $f\in M_H(U,\mu)$ the quantity
$\|f\|_M$ is finite. Obviously, it is a norm.
Let $\{f_n\}$ be a Cauchy sequence in $M_H(U,\mu)$. Then $\{f_n\}$  converges in $L^1(U,\mu)$ to
some function~$f$. By Fatou's theorem
$f\in M_H(U,\mu)$. In addition, $f$ is a limit of $\{f_n\}$ with respect to the norm in~$M_H(\mu)$:
if $\|f_n - f_k\|_M\le \varepsilon$
for all $n,k\ge n_1$, then $\|f_n - f\|_M\le \varepsilon$ for all $n\ge n_1$.
\end{proof}

\begin{definition}\label{def3}
We shall say that $f\in M_H(U,\mu)$ belongs to the class $SBV_H(U,\mu)$ if the
function $t\mapsto f(x+th)\varrho^{x,h}(t)$ belongs to the class
$BV_{loc}(U_{x,h})$ for every fixed $h\in H$ for almost all~$x$ and there
 exists an  $H$-valued measure $\Lambda_Uf$ on $U$ of bounded semivariation such
that, for every $h\in H$, the measure $(\Lambda_Uf, h)_H$ admits the representation
$$(\Lambda_Uf, h)_H=(\Lambda_Uf, h)_H^{x,h,\mu}\, \mu(dx),
$$
 where
the measures $(\Lambda_Uf, h)_H^{x,h,\mu}$ on the straight lines $x+\mathbb{R} h$
possess the property that
    $$(\Lambda_Uf, h)_H^{x,h}+f(x+th)\partial_t \varrho^{x,h}(t)$$
is the generalized derivative of the function $t\mapsto f(x+th)\varrho^{x,h}(t)$ on $U_{x,h}$.

The class  $BV_H(U,\mu)$ consists of all  $f\in SBV_H(U,\mu)$ such that
the measure $\Lambda_Uf$ has bounded variation.
\end{definition}

As we have warned above, the sections
$U_{x,h}$ in this definition are identified with intervals $J_{x,h}$ of the real line.

Note that the defining relation for $(\Lambda_Uf, h)_H^{x,h}$ can be stated in terms of
the functions $t\mapsto f(x+th)$ (not multiplied by conditional densities): the generalized
derivatives of these functions must be (as in Lemma~\ref{lem3.1})
$$\varrho^{x,h}(t)^{-1}(\Lambda_Uf, h)_H^{x,h}.
$$

An equivalent description of functions in $SBV_H(U,\mu)$ can be given in the form
of integration by parts if in place of the class $\mathcal{F}\mathcal{C}^\infty$
we use appropriate classes of test functions for every $h\in H$.

For any fixed vector $h\in H$ we choose a closed hyperplane $Y$ complementing $\mathbb{R} h$ and
consider the class $\mathcal{D}_h$ of all bounded  functions $\varphi$ on $X$
with the following properties: $\varphi$ is measurable with respect to all Borel measures,
for each $y\in Y$ the function $t\mapsto\varphi(y+th)$ is infinitely differentiable
and has compact support in the interval $J_{y,h}=\{t\colon\, y+th\in U\}$,
and the functions $\partial_h^n \varphi$ are bounded for all~$n\ge1$. Here
$\partial_h^n \varphi(y+th)$ is the derivative of order $n$ at the point $t$ for the function
$t\mapsto \varphi(y+th)$.

Note that $\psi\varphi\in \mathcal{D}_h$ for all $\varphi\in \mathcal{D}_h$
and $\psi\in \mathcal{F}\mathcal{C}^\infty$.

\begin{lemma}\label{lem3.4}
A function $f\in M_H(U,\mu)$ belongs to $SBV_H(U,\mu)$ precisely when
there exists an $H$-valued measure $\Lambda_Uf$ on $U$ of bounded semivariation such
that, for every $h\in H$ and all $\varphi\in \mathcal{D}_h$, one has the equality
\begin{multline*}
\int_X \partial_h\varphi(x) f(x)\, \mu(dx)
\\
=-\int_X \varphi(x) \, (\Lambda_U f,h)_H (dx)-\int_X \varphi(x) f(x)\beta_h(x)\, \mu(dx).
\end{multline*}
A similar assertion with variation in place of semivariation is true for the class $BV_H(U,\mu)$.
\end{lemma}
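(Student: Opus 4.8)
The plan is to mirror the proof of Lemma~\ref{lem3.1}, passing to conditional measures on the lines $x+\mathbb{R}h$ and reducing the asserted identity to the one-dimensional integration-by-parts formula on each fibre; the one new feature is that the test functions in $\mathcal{D}_h$ must be supported away from the endpoints of the fibre-dependent intervals $J_{x,h}$. Fix $h\in H$, a closed hyperplane $Y$ with $X=Y\oplus\mathbb{R}h$, the (continuous) projection $\pi\colon X\to Y$ and the (continuous) coordinate $t(x)\in\mathbb{R}$ with $x=\pi x+t(x)h$. I would set $\mu_Y=\pi_*\mu$ and disintegrate $\mu=\mu^{y,h}\,\mu_Y(dy)$ with $\mu^{y,h}$ of density $\varrho^{y,h}$ on $y+\mathbb{R}h$, and $(\Lambda_Uf,h)_H=(\Lambda_Uf,h)_H^{y,h,\mu_Y}\,\mu_Y(dy)$ (legitimate since the projection of $|(\Lambda_Uf,h)_H|$ onto $Y$ is absolutely continuous with respect to $\mu_Y$, as noted above), using $\mu_Y$ as the common reference measure exactly as in Lemma~\ref{lem3.1}; recall also $\beta_h(y+th)\varrho^{y,h}(t)=\partial_t\varrho^{y,h}(t)$. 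For $\varphi\in\mathcal{D}_h$ put $\phi_y(t)=\varphi(y+th)$, so that $\phi_y\in C_c^\infty(J_{y,h})$ and $\partial_h\varphi(y+th)=\phi_y'(t)$.

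For the direct implication, suppose $f\in SBV_H(U,\mu)$. By Definition~\ref{def3}, for $\mu_Y$-a.e.\ $y$ the function $u_y(t):=f(y+th)\varrho^{y,h}(t)$ lies in $BV_{loc}(J_{y,h})$ with generalized derivative the measure $\nu_y:=(\Lambda_Uf,h)_H^{y,h,\mu_Y}+f(y+th)\,\partial_t\varrho^{y,h}(t)\,dt$. I would apply the one-dimensional formula $\int\phi_y'\,u_y\,dt=-\int\phi_y\,d\nu_y$, valid because $\phi_y$ is compactly supported in $J_{y,h}$, and integrate it in $y$ against $\mu_Y$; reassembling the resulting three terms through the disintegrations yields the stated identity. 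Fubini applies since all three integrands are integrable for the relevant product structure: $\partial_h\varphi$ is bounded and $f\in L^1(\mu)$; $\varphi$ is bounded and $(\Lambda_Uf,h)_H$ has finite variation; $\varphi$ is bounded and $f\beta_h\in L^1(\mu)$ since $f\in M_H(U,\mu)$.

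For the converse, assume the integration-by-parts identity holds for every $h$ and every $\varphi\in\mathcal{D}_h$; fix $h$ and disintegrate as above. Since $(\Lambda_Uf,h)_H$ has finite variation and $f\beta_h\in L^1(\mu)$, the measure $\nu_y$ above is finite for $\mu_Y$-a.e.\ $y$, and $u_y\in L^1(J_{y,h})$ because $f\mu$ is finite. For rationals $a<b$ set $W_{a,b}=\{y\in Y:[a,b]\subset J_{y,h}\}$, a Borel subset of $Y$. Given a Borel $A\subset W_{a,b}$ and $\phi\in C_c^\infty((a,b))$, the product $\varphi(x)=\mathbf{1}_A(\pi x)\,\phi(t(x))$ lies in $\mathcal{D}_h$ (it is bounded and Borel, $t\mapsto\varphi(y+th)=\mathbf{1}_A(y)\phi(t)$ is $C^\infty$ with compact support inside $(a,b)\subset J_{y,h}$ for $y\in A$ and vanishes otherwise, and each $\partial_h^n\varphi$ is bounded); inserting it into the identity and disintegrating gives $\int_A\bigl[\int\phi'u_y\,dt+\int\phi\,d\nu_y\bigr]\mu_Y(dy)=0$. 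Since $A\subset W_{a,b}$ is arbitrary, $\int\phi'u_y\,dt+\int\phi\,d\nu_y=0$ for $\mu_Y$-a.e.\ $y\in W_{a,b}$; letting $\phi$ run through a countable set whose span is dense in $C_c^\infty((a,b))$ (so that both linear functionals $\phi\mapsto\int\phi'u_y$ and $\phi\mapsto\int\phi\,d\nu_y$ are determined, as $u_y\in L^1_{loc}$ and $\nu_y$ is a finite measure) and then taking the union over rational intervals $(a,b)$, I obtain, for $\mu_Y$-a.e.\ $y$, that $u_y'=\nu_y$ in $\mathcal{D}'(J_{y,h})$. In particular $u_y\in BV_{loc}(J_{y,h})$ with the generalized derivative prescribed by Definition~\ref{def3}, hence $f\in SBV_H(U,\mu)$.

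Finally, the $BV_H(U,\mu)$ statement requires nothing extra: in both directions $\Lambda_Uf$ entered only through its scalar components $(\Lambda_Uf,h)_H$, so replacing ``bounded semivariation'' by ``bounded variation of $\Lambda_Uf$'' leaves the argument unchanged. I expect the converse localization step — descending from the single integrated identity to the fibrewise distributional identity for $\mu_Y$-a.e.\ $y$ — to be the main obstacle, the point being that admissible test functions must sit compactly inside the variable intervals $J_{y,h}$; the exhaustion of $Y$ by the sets $W_{a,b}$ together with the product test functions $\mathbf{1}_A(\pi x)\phi(t(x))$ is what makes it work.
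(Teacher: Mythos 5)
Your overall strategy is sound, and your converse is genuinely different from the paper's: you localize directly with product test functions $\mathbf{1}_A(\pi x)\varphi(t(x))$ over the sets $W_{a,b}$ and work fibrewise from the start, whereas the paper multiplies by a single fibre-adapted cutoff $g_k\in\mathcal{D}_h$, deduces that $fg_k\mu$ is Skorohod differentiable with $d_h(fg_k\mu)=g_k(\Lambda_Uf,h)_H+fg_k\beta_h\mu+\partial_hg_k\,f\mu$, invokes the general disintegration theory for Skorohod-differentiable measures, and lets $k\to\infty$. The direct implication and the reduction of the $BV$ case to the scalar components are fine (for the Borel measurability of $W_{a,b}$ you should note that it follows from $H$-convexity: since $J_{y,h}$ is an interval, $[a,b]\subset J_{y,h}$ iff $y+ah\in U$ and $y+bh\in U$).

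There is, however, one genuine gap in the converse. You disintegrate $(\Lambda_Uf,h)_H=(\Lambda_Uf,h)_H^{y,h,\mu_Y}\,\mu_Y(dy)$ and justify this by ``the projection of $|(\Lambda_Uf,h)_H|$ onto $Y$ is absolutely continuous with respect to $\mu_Y$, as noted above.'' The remark you are citing establishes this only when $d_h(f\mu)$ is already known to exist, via $(\Lambda f,h)_H=d_h(f\mu)-f\beta_h\mu$ and the fact that projections of Skorohod derivatives are absolutely continuous. In the converse you are given only a measure $\Lambda_Uf$ of bounded semivariation satisfying the integration-by-parts identity against $\mathcal{D}_h$; a priori its projection onto $Y$ could have a singular part, and the existence of the $\mu_Y$-disintegration is precisely (a reformulation of) the structural requirement $(\Lambda_Uf,h)_H=(\Lambda_Uf,h)_H^{x,h,\mu}\,\mu(dx)$ in Definition~\ref{def3} that the converse must deliver. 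So as written the argument is circular at this point. The gap is fillable with your own tools: let $S\subset Y$ be a $\mu_Y$-null Borel set carrying the singular part of $\pi_*|(\Lambda_Uf,h)_H|$; for $A\subset S\cap W_{a,b}$ the two $\mu$-integrals in the identity vanish, so $\int \mathbf{1}_A(\pi x)\varphi(t(x))\,(\Lambda_Uf,h)_H(dx)=0$ for all such $\varphi$, and exhausting $U$ by the sets $\{\pi x\in W_{a,b},\ t(x)\in(a,b)\}$ over rational $a<b$ forces $|(\Lambda_Uf,h)_H|(\pi^{-1}(S))=0$. This step should be carried out explicitly (or replaced by the paper's route through Skorohod differentiability of $fg_k\mu$) before the fibrewise identification $u_y'=\nu_y$ can proceed.
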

\begin{proof}
If $f\in SBV_H(U,\mu)$, then the indicated equality follows from the definition and
the integration by parts formula for conditional measures. Let us prove the converse assertion.
Let us fix $k\in\mathbb{N}$.
It is readily verified that the set $Y_k$ of all points $y\in Y$ such that
the length of the interval $J_{y,h}$ is not less than~$8/k$
is measurable with respect to every Borel measure. In addition, it is not difficult
to show that  there exists a function $g_k\in \mathcal{D}_h$ measurable with respect to every Borel
measure and possessing the following properties:
$0\le g_k\le 1$, $g_k(y)=0$ if the length of $J_{y,h}$ is less than~$8/k$,
$g_k(y+th)=0$ if $t\not\in J_{y,h}$ or if $t\in J_{y,h}$ and the distance from $t$ to
an endpoint of $J_{y,h}$ is less than~$1/k$, $g_k(y+th)=1$ if
$t\in J_{y,h}$ and the distance from $t$ to an endpoint of  $J_{y,h}$ is not less than~$2/k$.
It follows from our hypothesis that for all $\psi\in \mathcal{F}\mathcal{C}^\infty$
we have the equality
\begin{multline*}
\int_X \partial_h\psi(x) g_k(x)f(x)\, \mu(dx)
=-\int_X \psi(x)g_k(x) \, (\Lambda_U f,h)_H (dx) \,-
\\
-\int_X \psi(x)g_k(x) f(x)\beta_h(x)\, \mu(dx)
-\int_X \psi(x)\partial_h g_k(x) f(x)\, \mu(dx).
\end{multline*}
Therefore, the measure $fg_k\mu$ is Skorohod differentiable and
$$
d_h(fg_k\mu)=g_k (\Lambda_U f,h)_H+ fg_k\beta_h\mu+ \partial_h g_k f\mu.
$$
Using the disintegration for $fg_k\mu$ and letting $k\to\infty$, we obtain the
 disintegration for $f\mu$ required by the definition.
\end{proof}

\begin{lemma}\label{lem3.5}
If $f\in SBV_H(U,\mu)$ and $\psi\in C^1_b(\mathbb{R})$, then
$$
\psi(f)\in SBV_H(U,\mu)
$$
and for any $h\in H$ one has
$$
(\Lambda_U\psi(f), h)_H=(\Lambda_U\psi(f), h)_H^{x,h,\mu}\, \mu(dx)
$$
with
$$
(\Lambda_U\psi(f ), h)_H^{x,h,\mu} = \psi'(f)(x+th)(\Lambda_U f,h),
$$
where $\psi'(f)(x+th)$ is redefined at the points of jumps of the function $t\mapsto f(x+th)$ by
the expression $\frac{\psi(f(x+th+))-\psi(f(x+th-))}{f(x+th+)-f(x+th-)}$.
Moreover,
 $$
 V(\Lambda_U\psi(f))\leq L V(\Lambda_Uf),
 $$
where $L=\sup_{u\neq v}\frac{|f(u)-f(v)|}{|u-v|}$ is the Lipschitz
constant of~$f$.

A similar assertion is true for $BV_H(U,\mu)$.
\end{lemma}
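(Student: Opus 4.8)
The plan is to reduce everything to the classical chain rule for functions of one real variable of bounded variation by slicing along the lines $x+\mathbb{R}h$, and then to glue the resulting conditional measures back into a single $H$-valued measure. Recall the one-dimensional fact to be used: if $u\in BV_{loc}(J)$ on an interval $J$ and $\psi\in C^1_b(\mathbb{R})$, then $\psi\circ u\in BV_{loc}(J)$ with generalized derivative $\overline{\psi'\circ u}\,\partial_t u$, where $\overline{\psi'\circ u}(t)=\psi'(u(t))$ at points of approximate continuity of $u$ and $\overline{\psi'\circ u}(t)=[\psi(u(t+))-\psi(u(t-))]/[u(t+)-u(t-)]$ at the at most countably many jump points; in particular $|\overline{\psi'\circ u}|\le L$ everywhere, where $L=\sup_t|\psi'(t)|$ is the Lipschitz constant of~$\psi$. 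Fixing $h\in H$ and a closed hyperplane $Y$ with $X=Y\oplus\mathbb{R}h$, and writing $u(t)=f(x+th)$, $\varrho(t)=\varrho^{x,h}(t)$, Definition~\ref{def3} tells us that for $\mu$-almost every $x$ one has $u\in BV_{loc}(U_{x,h})$ and $\partial_t(u\varrho)=(\Lambda_U f,h)_H^{x,h}+u\,\partial_t\varrho$; since $\varrho\in C^1$ is positive, the Leibniz rule gives $\varrho\,\partial_t u=(\Lambda_U f,h)_H^{x,h}$ as measures on the line.

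Next I would apply the one-dimensional chain rule on each of these lines: $\psi(u)\in BV_{loc}(U_{x,h})$, and
\[
\partial_t\bigl(\psi(u)\varrho\bigr)=\varrho\,\partial_t(\psi\circ u)+\psi(u)\,\partial_t\varrho
=\overline{\psi'\circ u}\,(\Lambda_U f,h)_H^{x,h}+\psi(u)\,\partial_t\varrho .
\]
This is exactly the relation required by Definition~\ref{def3} for the function $\psi(f)$, with conditional measure $(\Lambda_U\psi(f),h)_H^{x,h}:=\overline{\psi'\circ u}\,(\Lambda_U f,h)_H^{x,h}$ --- which is the formula asserted in the lemma, with $\psi'(f)(x+th)$ carrying the stated correction at jumps. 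Integrating the one-dimensional integration-by-parts formula over $Y$ against $\mu_Y$ and using $\partial_t\varrho^{x,h}(t)=\beta_h(x+th)\varrho^{x,h}(t)$ yields, for every $\varphi\in\mathcal{D}_h$,
\[
\int_X\partial_h\varphi\,\psi(f)\,d\mu=-\int_X\varphi\,d\bigl[\overline{\psi'\circ f}\cdot(\Lambda_U f,h)_H\bigr]-\int_X\varphi\,\psi(f)\beta_h\,d\mu ,
\]
the first measure on the right being the one with the disintegration just described. Also $\psi(f)\in M_H(U,\mu)$, since $\psi$ is bounded, $\mu$ is a probability measure, and $\sup_{|h|_H\le1}\|\beta_h\|_{L^1(\mu)}<\infty$ by the continuity of the embedding $H\to D(\mu)$.

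It then remains to verify that the scalar measures $\overline{\psi'\circ f}\cdot(\Lambda_U f,h)_H$, $h\in H$, are the components $(\Xi,h)_H$ of a single $H$-valued measure $\Xi=:\Lambda_U\psi(f)$; once this is known, Lemma~\ref{lem3.4} completes the proof, and the bounds $V(\Lambda_U\psi(f))=\sup_{|h|_H\le1}\|(\Lambda_U\psi(f),h)_H\|\le L\,V(\Lambda_U f)$ and, in the $BV_H$ case, ${\rm Var}(\Lambda_U\psi(f))\le L\,{\rm Var}(\Lambda_U f)$ follow from $|\overline{\psi'\circ f}|\le L$ together with the inequality $|(\eta,h)_H(B)|\le|\eta(B)|_H$ for $|h|_H\le1$ and the definitions of semivariation and variation. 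I expect this gluing to be the main obstacle: the jump correction in $\overline{\psi'\circ f}$ genuinely depends on the direction $h$, and $\Lambda_U f$ may carry a part singular (even atomic) with respect to $\mu$ sitting on these $h$-dependent jump sets, so additivity and linearity in $h$ cannot be read off directly. I would extract them from the integration-by-parts identity above, which determines $(\Lambda_U\psi(f),h)_H$ uniquely from $\psi(f)$: testing against a sufficiently rich common class of functions --- for instance products $\psi_0 g_k$ with $\psi_0\in\mathcal{F}\mathcal{C}^\infty$ and $g_k$ the cut-offs used in the proof of Lemma~\ref{lem3.4} --- shows that $h\mapsto(\Lambda_U\psi(f),h)_H(B)$ is additive in the Borel set $B$ and linear and uniformly bounded, by $L\,V(\Lambda_U f)$, in $h$; the Riesz representation theorem then produces $\Lambda_U\psi(f)(B)\in H$ for each $B$, with countable additivity and bounded semivariation following by dominated convergence. (On the part of $\Lambda_U f$ absolutely continuous with respect to $\mu$ the difficulty disappears, since for fixed $h$ the union of the jump sets of the sections is $\mu$-null, the conditional measures of $\mu$ being nonatomic, so there $\overline{\psi'\circ f}$ agrees $\Lambda_U f$-a.e.\ with $\psi'$ of any fixed Borel version of $f$.) The claimed disintegration of $\Xi$ and the validity of Definition~\ref{def3} for $\psi(f)$ then reduce to the per-line identities already established, and the case of $BV_H(U,\mu)$ is handled identically with variation replacing semivariation throughout.
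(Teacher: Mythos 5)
Your proposal is correct and follows essentially the same route as the paper, whose entire proof is the one-line remark that it suffices to use conditional measures and the one-dimensional chain rule for BV functions (citing Ambrosio--Fusco--Pallara); your slicing along $x+\mathbb{R}h$, the per-line identity $\varrho\,\partial_t(\psi\circ u)=\overline{\psi'\circ u}\,(\Lambda_U f,h)_H^{x,h}$, and the bound via $|\overline{\psi'\circ u}|\le L$ are exactly that argument spelled out. The additional care you take over gluing the $h$-dependent conditional measures into a single $H$-valued measure $\Lambda_U\psi(f)$ addresses a point the paper silently skips (and note the paper's statement has a typo: $L$ should be the Lipschitz constant of $\psi$, not of $f$, as you correctly use).
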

\begin{proof}
It suffices to use conditional measures and
apply the chain rule for BV-functions in the one-dimensional case,
see, e.g., \cite[p.~188]{AFP}.
\end{proof}

\begin{theorem}
The set $SBV_H(U,\mu)$ is a Banach space with the norm
$$
\|f\|_{SBV}:=\|f\|_{M}+ V(\Lambda_U f).
$$
The set $BV_H(U,\mu)$ is a  Banach space with the norm
$$
\|f\|_{BV}:=\|f\|_{M}+ {\rm Var}(\Lambda_U f).
$$
\end{theorem}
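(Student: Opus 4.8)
The plan is to verify the two required properties of a Banach space—that $\|\cdot\|_{SBV}$ (resp.\ $\|\cdot\|_{BV}$) is a norm, and that the space is complete—reducing everything to facts already established: the completeness of $M_H(U,\mu)$, the disintegration description of Definition~\ref{def3}, and standard lower semicontinuity of variation and semivariation under pointwise limits. First I would check that $\|\cdot\|_{SBV}$ is a norm. Positivity, homogeneity, and the triangle inequality for $\|\cdot\|_M$ are already known; for the $\Lambda_U$-part one uses that $f\mapsto\Lambda_U f$ is linear (immediate from the integration-by-parts characterization in Lemma~\ref{lem3.4}, since the right-hand side is linear in $f$ and the measure is uniquely determined by it) and that $V$ and ${\rm Var}$ are seminorms on vector measures. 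The only point needing a word is that $\|f\|_{SBV}=0$ forces $f=0$: this already follows from $\|f\|_M=0$.

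The substance is completeness. Let $\{f_n\}$ be Cauchy in $SBV_H(U,\mu)$. Then $\{f_n\}$ is Cauchy in $M_H(U,\mu)$, so by the lemma on $M_H(U,\mu)$ there is $f\in M_H(U,\mu)$ with $f_n\to f$ in the $M$-norm; passing to a subsequence we may assume $f_n\to f$ $\mu$-a.e.\ and also (for each fixed $h$, after a further diagonal subsequence over a countable dense set of directions, then extended by the continuity of $h\mapsto\Lambda_U$-data) $\mu$-a.e.\ convergence of the one-dimensional traces $t\mapsto f_n(x+th)$. Next, the vector measures $\Lambda_U f_n$ form a Cauchy sequence in the space of $H$-valued measures of bounded semivariation (a Banach space under $V$), so $\Lambda_U f_n\to\Lambda$ in semivariation for some $H$-valued measure $\Lambda$ of bounded semivariation; in the $BV_H$ case the analogous convergence holds in variation. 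It remains to identify $\Lambda$ with $\Lambda_U f$, i.e.\ to show $f\in SBV_H(U,\mu)$ with $\Lambda_U f=\Lambda$. For this I would pass to the limit in the integration-by-parts identity of Lemma~\ref{lem3.4}: for fixed $h\in H$ and $\varphi\in\mathcal D_h$,
\[
\int_X \partial_h\varphi\, f_n\, d\mu
=-\int_X \varphi\, (\Lambda_U f_n,h)_H-\int_X \varphi\, f_n\beta_h\, d\mu .
\]
The left side converges because $f_n\to f$ in $L^1(\mu)$ and $\partial_h\varphi$ is bounded; the second term on the right converges because $f_n\beta_h\to f\beta_h$ in $L^1(\mu)$ (this is exactly the closed-graph argument used in the proof of the $M_H(U,\mu)$ lemma) and $\varphi$ is bounded; the first term on the right converges because $|{\int\varphi\,(\Lambda_U f_n-\Lambda,h)_H}|\le \|\varphi\|_\infty\,V(\Lambda_U f_n-\Lambda)\to0$. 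Hence $f$ satisfies the identity of Lemma~\ref{lem3.4} with the measure $\Lambda$, so by that lemma $f\in SBV_H(U,\mu)$ and $\Lambda_U f=\Lambda$; the $BV_H$ case is identical with variation in place of semivariation. Finally, the Cauchy property in the full norm gives convergence: for every $\varepsilon>0$ there is $n_1$ with $\|f_n-f_k\|_{SBV}\le\varepsilon$ for $n,k\ge n_1$, and letting $k\to\infty$ using $f_k\to f$ in $M$-norm together with the lower semicontinuity $V(\Lambda_U(f_n-f))\le\liminf_k V(\Lambda_U(f_n-f_k))$ yields $\|f_n-f\|_{SBV}\le\varepsilon$ for $n\ge n_1$.

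The main obstacle I anticipate is the passage to the limit in the first term on the right-hand side of the integration-by-parts identity, i.e.\ verifying that weak-type convergence of $\Lambda_U f_n$ to $\Lambda$ against the test functions $\varphi\in\mathcal D_h$ follows from convergence in semivariation; this is where one must be careful that the $\varphi$'s, though merely Borel-measurable-and-bounded rather than continuous, are still legitimate integrands against vector measures of bounded semivariation (they are, since such measures integrate all bounded measurable functions, with $|{\int\varphi\,d\eta}|_H\le\|\varphi\|_\infty V(\eta)$). A secondary technical point is the simultaneous choice of conditional measures/disintegrations so that the a.e.\ statements about traces $t\mapsto f_n(x+th)$ are compatible as $n$ varies; this is handled exactly as in Lemma~\ref{lem3.1} and Lemma~\ref{lem3.4} by working with a common reference measure on the complementary hyperplane $Y$, so no genuinely new difficulty arises there.
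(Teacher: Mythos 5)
Your proposal is correct and follows essentially the same route as the paper: completeness of $M_H(U,\mu)$ gives the limit $f$, Cauchyness in semivariation (resp.\ variation) gives the limit measure, and Lemma~\ref{lem3.4} identifies that measure as $\Lambda_U f$. The paper states the identification step as "easy to show"; you have simply filled in the passage to the limit in the integration-by-parts identity, which is exactly the intended argument.
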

\begin{proof}
Since the space $M_H(U,\mu)$ is complete,
every Cauchy sequence $\{f_n\}$ in $SBV_H(U,\mu)$  converges
in the $M$-norm to some function $f \in L^1(U,\mu)$. The sequence of
measures $\Lambda_Uf_n$ is Cauchy in semivariation, hence converges in the norm $V$ to some measure
$\nu$ of bounded semivariation. Applying Lemma~\ref{lem3.4} it is easy to show that $f\in SBV_H(U,\mu)$
and $\nu = \Lambda_U f$ is the corresponding $H$-valued measure. Since
$$
  \|f_n - f\|_M + V(\Lambda_U f_n - \Lambda_U f) \to 0,
$$
it follows that $f$ is a limit of $\{f_n\}$ in the norm of the space $SBV_H(U,\mu)$. The proof
of completeness of the space $BV_H(U,\mu)$ is similar.
\end{proof}

The next theorem generalizes
an analogous assertion for functions in $SBV$
on the whole  space proved in \cite{BR} and confirms the conjecture
stated in \cite{BR} about the case of~$BV$.

\begin{theorem}
The classes  $SBV_H(U,\mu)$ and $BV_H(U,\mu)$ have the following property{\rm:}
if a sequence of functions $\{f_n\}$ is norm bounded in it
 and converges almost everywhere to a function $f$, then $f$ belongs to the same class,
and the norm of $f$ does not exceed the  precise upper bound of the norms of the functions~$f_n$.

Moreover, for every fixed $h\in H$, the measures $(\Lambda_U f_n,h)_H$ converge
to $(\Lambda_U f,h)_H$ in the weak topology generated by the duality with~$\mathcal{D}_h$ {\rm(}in the case $U=X$
also with respect to the duality with $\mathcal{F}\mathcal{C}^\infty${\rm)}.
\end{theorem}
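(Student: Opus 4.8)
The plan is to use Lemma~\ref{lem3.4} to reduce the whole statement to two tasks: showing that $f\in M_H(U,\mu)$, and producing an $H$-valued measure $\Lambda_U f$ of bounded semivariation for which the integration-by-parts identity of Lemma~\ref{lem3.4} holds for every $h\in H$ and every $\varphi\in\mathcal{D}_h$. The membership $f\in M_H(U,\mu)$ is immediate from Fatou's lemma: from $f_n\to f$ a.e. one gets $\|f\|_{L^1(U,\mu)}\le\liminf_n\|f_n\|_{L^1(U,\mu)}$, and for each fixed $h$ also $f_n\beta_h\to f\beta_h$ a.e., hence $\|f\beta_h\|_{L^1(U,\mu)}\le\liminf_n\|f_n\beta_h\|_{L^1(U,\mu)}$; taking the supremum over $|h|_H\le1$ yields $\|f\|_M\le\liminf_n\|f_n\|_M$ and in particular $f\in L^1(U,\mu)$. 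The real difficulty is the construction of $\Lambda_U f$, because the space of $H$-valued measures of bounded semivariation carries no useful weak compactness; I would circumvent this by working one direction $h$ at a time and using one-dimensional $BV$-theory, together with the Orlicz--Pettis theorem to reassemble the pieces.

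First I would reduce to uniformly bounded sequences by truncation. Pick $\psi_N\in C^1_b(\mathbb{R})$ with $\psi_N(t)=t$ for $|t|\le N$, $|\psi_N(t)|\le|t|$ and Lipschitz constant $1$; then $|\psi_N(f_n)|\le N+1$, while Lemma~\ref{lem3.5} together with $|\psi_N(t)|\le|t|$ gives $\|\psi_N(f_n)\|_{SBV}\le\|f_n\|_{SBV}$, and $\psi_N(f_n)\to\psi_N(f)$ a.e. Granting the theorem for uniformly bounded sequences, one obtains $\psi_N(f)\in SBV_H(U,\mu)$ with $\|\psi_N(f)\|_{SBV}\le\liminf_n\|\psi_N(f_n)\|_{SBV}\le\liminf_n\|f_n\|_{SBV}$ for every $N$. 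Since $\psi_N(f)\to f$ a.e. and $|\psi_N(f)|\le|f|\in L^1(U,\mu)$, the same construction can then be run once more with the sequence $\{\psi_N(f)\}_N$, this time dominated by $|f|$ instead of a constant, to get $f\in SBV_H(U,\mu)$ with $V(\Lambda_U f)\le\liminf_N V(\Lambda_U\psi_N(f))\le\liminf_n V(\Lambda_U f_n)$; combined with $\|f\|_M\le\liminf_n\|f_n\|_M$ and $\liminf a_n+\liminf b_n\le\liminf(a_n+b_n)$ this gives $\|f\|_{SBV}\le\liminf_n\|f_n\|_{SBV}\le\sup_n\|f_n\|_{SBV}$.

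For the core step assume $|f_n|\le C_0$ (or, in the second pass, $|f_n|\le G$ with $G\in L^1(U,\mu)$). Fix $h$ with $|h|_H\le1$, a closed hyperplane $Y$ complementing $\mathbb{R}h$, and a common reference measure $\mu_Y$ on $Y$; disintegrate $\mu$, $f_n\beta_h\mu$ and $(\Lambda_U f_n,h)_H$ along the lines $y+\mathbb{R}h$ over $\mu_Y$. On a.e. line the function $u_n^y(t):=f_n(y+th)\varrho^{y,h}(t)$ has locally bounded variation with distributional derivative $(\Lambda_U f_n,h)_H^{y,h,\mu_Y}+f_n(y+th)\,\partial_t\varrho^{y,h}(t)\,dt$. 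Because $|f_n|\le C_0$ (resp. $G$) and $\varrho^{y,h},\partial_t\varrho^{y,h}\in L^1_{loc}(J_{y,h})$ for a.e. $y$, dominated convergence gives $u_n^y\to u^y:=f(y+th)\varrho^{y,h}(t)$ and $f_n(y+th)\partial_t\varrho^{y,h}\to f(y+th)\partial_t\varrho^{y,h}$ in $L^1_{loc}(J_{y,h})$, hence $(\Lambda_U f_n,h)_H^{y,h,\mu_Y}$ converges in $\mathcal{D}'(J_{y,h})$ to a distribution $\lambda_h^y$. By Fatou applied to $\int_Y{\rm Var}\bigl((\Lambda_U f_n,h)_H^{y,h,\mu_Y}\bigr)\,\mu_Y(dy)={\rm Var}((\Lambda_U f_n,h)_H)\le V(\Lambda_U f_n)$, the quantity $\liminf_n{\rm Var}\bigl((\Lambda_U f_n,h)_H^{y,h,\mu_Y}\bigr)$ is finite for a.e. $y$, and by lower semicontinuity of the total variation under distributional convergence $\lambda_h^y$ is a finite signed measure on $J_{y,h}$ with ${\rm Var}(\lambda_h^y)$ bounded by that $\liminf$; in particular $Du^y=\lambda_h^y+f(y+th)\partial_t\varrho^{y,h}\,dt$ is a finite measure, so $u^y\in BV_{loc}(J_{y,h})$. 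Setting $(\Lambda_U f,h)_H:=\lambda_h^y\,\mu_Y(dy)$ and integrating the pointwise bound over $Y$ gives ${\rm Var}((\Lambda_U f,h)_H)\le\liminf_n{\rm Var}((\Lambda_U f_n,h)_H)\le\liminf_n V(\Lambda_U f_n)$, hence $\sup_{|h|_H\le1}{\rm Var}((\Lambda_U f,h)_H)<\infty$. Since for each $n$ the scalar measures $B\mapsto(\Lambda_U f_n,h)_H(B)$ are the $H$-scalarisations of the genuine $H$-valued measure $\Lambda_U f_n$, this uniform semivariation bound together with the Orlicz--Pettis theorem lets one glue the measures $(\Lambda_U f,h)_H$ into a single $H$-valued measure $\Lambda_U f$ of bounded semivariation $\le\liminf_n V(\Lambda_U f_n)$.

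Finally, the defining relation $(\Lambda_U f,h)_H^{y,h,\mu_Y}=Du^y-f(y+th)\partial_t\varrho^{y,h}\,dt$ on each line is exactly the relation required in Definition~\ref{def3}; integrating the one-dimensional integration-by-parts formula for $BV$ functions against $\varphi\in\mathcal{D}_h$ over $Y$, and using $\partial_t\varrho^{y,h}=\beta_h(y+th)\varrho^{y,h}$, produces the identity of Lemma~\ref{lem3.4}, so $f\in SBV_H(U,\mu)$, and with $\|f\|_M\le\liminf_n\|f_n\|_M$ we obtain the asserted norm bound. Passing to the limit in the same integration-by-parts identities (for the truncated functions, where the necessary $L^1$-convergences $f_n\to f$ and $f_n\beta_h\to f\beta_h$ against the test functions are available from domination) gives the weak convergence $(\Lambda_U f_n,h)_H\to(\Lambda_U f,h)_H$ against $\mathcal{D}_h$; when $U=X$ the defining identity for $SBV_H(\mu)$ holds against all $\varphi\in\mathcal{F}\mathcal{C}^\infty$, so the same limiting argument yields convergence in the $\mathcal{F}\mathcal{C}^\infty$-weak topology as well. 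The case of $BV_H(U,\mu)$ is handled identically, keeping track of the full variation ${\rm Var}(\Lambda_U f)$, which is again lower semicontinuous under the established weak convergence. I expect the construction of $\Lambda_U f$ to be the main obstacle: specifically, the verification that the direction-wise scalar measures truly come from one $H$-valued measure, and the bookkeeping of boundary terms through the classes $\mathcal{D}_h$; the truncation reduction is what makes all the needed $L^1$-passages to the limit legitimate, since no Poincaré-type inequality is available for a general differentiable measure $\mu$.
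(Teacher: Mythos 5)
Your proposal follows essentially the same route as the paper: reduce to a dominated sequence by truncation via Lemma~\ref{lem3.5}, work one direction $h$ at a time through the disintegration along lines $y+\mathbb{R}h$, use one-dimensional lower semicontinuity of the variation under distributional convergence together with Fatou over $Y$ to produce the scalar measures $(\Lambda_U f,h)_H$, assemble them into an $H$-valued measure by a Pettis-type theorem, and recover the defining relation and the weak convergence from the integration-by-parts identity of Lemma~\ref{lem3.4}. Your two-pass truncation (first with constants, then with the sequence $\{\psi_N(f)\}_N$ dominated by $|f|$) is a clean variant of the paper's argument, which instead exploits that the truncations agree on the sets $\{|f|<m\}$ and takes a setwise limit of the conditional measures.

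The one place where your write-up is genuinely thin is the $BV$ case: the assertion that ${\rm Var}(\Lambda_U f)$ ``is again lower semicontinuous under the established weak convergence'' cannot be used as a black box. Direction-wise weak convergence of $(\Lambda_U f_n,h)_H$ against $\mathcal{D}_h$ does not by itself control $\sup\sum_i|\Lambda_U f(\Omega_i)|_H$, because the relevant test objects are Borel $H$-valued step maps $v=\sum_i I_{\Omega_i}h_i$, whose components are not in $\mathcal{D}_{h_i}$. The paper closes this by representing the $\Lambda_U f_n$ through Bochner densities $R_n$ with respect to a common reference measure (so that $\bigl|\int(v,R_n)\,d\nu\bigr|\le{\rm Var}(\Lambda_U f_n)$ holds for \emph{arbitrary} $v$ with $|v|_H\le1$), and then approximating the $v_i$ in $L^1(|(\Lambda_U f,h_i)_H|)$ by functions in $\mathcal{D}_{h_i}$ on the limit side only. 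You would need to insert this duality/approximation step; the rest of your argument supplies all the ingredients for it.
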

\begin{proof}
These assertions are true on the real line, since our assumption about the conditional densities means
that the density of $\mu$ is positive, so $\Lambda f_n=\varrho f_n'$,
where $f_n'$ is the generalized derivative,  and these measures
converge to $\varrho f'$ in the sense of distributions.

In the general case suppose first that $\{f_n\}$ is uniformly bounded.
Let us fix $h\in H$.
Since the measures $(\Lambda_U f_n,h)_H$ are uniformly bounded and
$(\Lambda_U f_n,h)_H=(\Lambda_U f_n,h)_H^{x,h,\mu}\, \mu(dx)$, by Fatou's theorem
the function
$$
\liminf_{n\to\infty} \|(\Lambda_U f_n,h)_H^{x,h,\mu}\|
$$
is $\mu$-integrable. In particular, it is  finite almost everywhere, hence
the restriction of the function $f$ to almost every straight line $x+\mathbb{R} h$ is
in $BV_{loc}$. Moreover, we obtain finite measures
$(\Lambda_U f,h)_H^{x,h,\mu}$ on these straight lines such that the measure
$$
(\Lambda_U f,h)_H:=(\Lambda_U f,h)_H^{x,h,\mu}\, \mu(dx)
$$
 is finite for every $h\in H$.
By the Pettis theorem (see \cite[Chapter~IV, \S 10]{DS}) we obtain an $H$-valued measure
$\Lambda_U f$. It meets the requirements in Definition~\ref{def3}.
For any  $\varphi \in \mathcal{D}_h$ by the Lebesgue dominated convergence theorem we have
$$
\int_{X} \partial_h\varphi(x) f(x) \mu(dx)=\lim\limits_{n\to\infty}
\int_{X} \partial_h\varphi(x) f_n(x) \mu(dx),
$$
$$
\int_{X} \varphi(x) f(x)\beta_h(x) \mu(dx)=\lim\limits_{n\to\infty}
\int_{X} \varphi(x) f_n(x)\beta_h(x) \mu(dx).
$$
Therefore, the integrals of $\varphi$ with respect to the measures
$(\Lambda_U f_n,h)_H$ converge. Moreover, the limit is the integral of
$\varphi$ with respect to $(\Lambda_U f,h)_H$, which follows by the one-dimensional
case applied to the conditional measures. This completes the proof in the case
of $SBV$ and bounded $\{f_n\}$.

In the case of $BV$ it is necessary to show that
$\Lambda_U f$ has bounded variation. The measures $\Lambda f_n$ possess $H$-valued vector
densities $R_n$ with respect to some common nonnegative measure~$\nu$
and the sequence of functions $|R_n|_H$ is bounded in $L^1(\nu)$.
It suffices to show that for every Borel mapping $v$
 such that $|v|_H\le 1$ and $v=\sum_{i=1}^k v_i h_i$, where $h_i\in H$ are constant
and orthonormal,
we have the estimate
$$
\sum_{i=1}^k \int_X v_i(x)\, (\Lambda_U f,h_i)_H (dx)\le
\sup_n {\rm Var}(\Lambda_U f_n).
$$
It is readily seen that it is enough to do this for $v$ with functions $v_i$ such
that $v_i\in \mathcal{D}_{h_i}$.
Indeed, by using convolutions we reduce the general case to the case where the function
$v_i$ has bounded derivatives of any order along the vector~$h_i$. Next, we
approximate such functions  in $L^1(|(\Lambda_U f,h_i)_H|)$ by their products with
functions $w_{i,n}\in \mathcal{D}_{h_i}$
with the following properties: \mbox{$0\le w_{i,n}\le1$,}
$w_{i,n}(y+th_i)=0$ whenever the length $\delta_{y,h,i}$ of $U_{y,h_i}$ is less than~$4/n$
and otherwise $w_{i,n}=1$ on the inner interval of length $\delta_{y,h_i}-2/n$ with the same center
as~$U_{y,h_i}$.

For such functions we have
\begin{multline*}
\sum_{i=1}^k \int_X v_i(x)\, (\Lambda_U f,h_i)_H (dx)=
\lim\limits_{n\to\infty}
\sum_{i=1}^k \int_X v_i(x)\, (\Lambda_U f_n,h_i)_H (dx)
\\
=
\lim\limits_{n\to\infty}\int_X (v(x),R_n(x))_H\, \nu(dx)
\\
\le
\sup_n \int_X |R_n(x)|_H\, \nu(dx)=\sup_n {\rm Var}\, \Lambda_U f_n.
\end{multline*}
Thus, the case of a uniformly bounded $\{f_n\}$ is considered.

Let us now proceed to the general case where $\{f_n\}$ is not uniformly bounded.
Take a smooth increasing function $\psi$ on the real line such that
$\psi(t)=t$ if $|t|\le 1$, $\psi(t)={\rm 2\,sign}\, t$ if $|t|\ge 3$, $|\psi(t)|\le |t|$,
and $|\psi'(t)|\le 1$. Let $\psi_m(t)=\psi(t/m)$.
According to Lemma~\ref{lem3.5}, for every fixed $m$ the functions
$\psi_m(f_n)$
belong to the respective (SBV or BV) class and their norms are uniformly bounded
in $n$ and $m$.
Hence the function
$\psi_m(f)$ belongs to the same class and its norm does not exceed
the supremum of the norms of $\{f_n\}$.
We shall deal with a Borel version of~$f$,  so the functions $\psi_m(f)$ are also
Borel.
The Borel sets \mbox{$B_m=\{|f|< m\}$} are increasing to $X$ and
$|\psi_m(f)|\le |f|$. Clearly, $f\in M_H(U,\mu)$ and
$\|f\|_M=\lim\limits_{m\to\infty} \|\psi_m(f)\|_M$.
Since $\psi_{m+1}(f)$ coincides with $\psi_m(f)$ on the set~$B_m$, we obtain that
the conditional measures $(\Lambda_U \psi_m(f),h)_H^{x,h}$
on the straight lines $x+\mathbb{R}h$  have a finite
setwise  limit for almost every~$x$, and the measures $\sigma^{x,h}$
obtained in the limit give rise to bounded measures
$\sigma^{x,h}\, \mu(dx)$, which can be taken for $(\Lambda_U f,h)_H$.
In the case of $BV_H(U,\mu)$ we have additionally
that the resulting vector measure $\Lambda f$ is of bounded variation.
\end{proof}

\begin{corollary}\label{c-lip}
If $f\in SBV_H(U,\mu)$ and $\psi$ is a Lipschitzian function on the real line, then
$\psi(f)\in SBV_H(U,\mu)$ and $\|\psi(f)\|_{SBV}\le C \|f\|_{SBV}$, where $C$ is a Lipschitz
constant for~$\psi$.

The same is true in the case of $BV_H(U,\mu)$.
\end{corollary}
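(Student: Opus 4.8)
The plan is to deduce the statement from Lemma~\ref{lem3.5} by approximating $\psi$ with smooth functions whose Lipschitz constants do not exceed that of~$\psi$, and then to pass to the limit by means of the preceding theorem on limits of norm-bounded sequences converging almost everywhere.

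First I would dispose of the value $\psi(0)$. Since $\mu$ is a probability measure and $\sup_{|h|_H\le1}\|\beta_h\|_{L^1(\mu)}<\infty$ by the continuity of the embedding $H\to D(\mu)$, the constant $1$ belongs to $M_H(U,\mu)$ with $\Lambda_U1=0$, so every constant lies in $SBV_H(U,\mu)$ and in $BV_H(U,\mu)$. Replacing $\psi$ by $\psi-\psi(0)$ alters $\psi(f)$ only by adding this constant, so one may assume $\psi(0)=0$; then $|\psi(f)|\le C|f|$ and $|\psi(f)\beta_h|\le C|f\beta_h|$ pointwise, whence $\psi(f)\in M_H(U,\mu)$ and $\|\psi(f)\|_M\le C\|f\|_M$.

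Next I would choose a standard symmetric mollifier $(\rho_\varepsilon)$ on the real line, put $\tau_k(t)=\max(-k,\min(k,t))$, and set
$$
\psi_k:=(\psi\circ\tau_k)*\rho_{1/k}.
$$
Since $\psi\circ\tau_k$ is bounded and $C$-Lipschitz, we get $\psi_k\in C_b^\infty(\mathbb{R})\subset C^1_b(\mathbb{R})$, each $\psi_k$ is $C$-Lipschitz, $\psi_k(t)\to\psi(t)$ for every $t$, and $\psi_k(0)\to0$. By Lemma~\ref{lem3.5}, $\psi_k(f)\in SBV_H(U,\mu)$ with $V(\Lambda_U\psi_k(f))\le C\,V(\Lambda_Uf)$, and also $\psi_k(f)\in BV_H(U,\mu)$ with ${\rm Var}(\Lambda_U\psi_k(f))\le C\,{\rm Var}(\Lambda_Uf)$ in the second case; moreover $|\psi_k(f)|\le|\psi_k(0)|+C|f|$ gives $\|\psi_k(f)\|_M\le|\psi_k(0)|\,\|1\|_M+C\|f\|_M$. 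Hence $\{\psi_k(f)\}$ is norm bounded in $SBV_H(U,\mu)$ (resp.\ $BV_H(U,\mu)$), converges $\mu$-a.e.\ to $\psi(f)$ by continuity of~$\psi$, and $\limsup_k\|\psi_k(f)\|_{SBV}\le C\|f\|_{SBV}$ since $\psi_k(0)\to0$. Applying the preceding theorem to the tails $\{\psi_k(f)\}_{k\ge K}$ and letting $K\to\infty$ then shows that $\psi(f)$ belongs to $SBV_H(U,\mu)$ (resp.\ $BV_H(U,\mu)$) and that $\|\psi(f)\|_{SBV}\le C\|f\|_{SBV}$, with the analogous inequality for the $BV$ norm.

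The only genuinely delicate point is keeping the approximants in $C^1_b(\mathbb{R})$ without inflating the Lipschitz constant; this is why I compose $\psi$ with the $1$-Lipschitz truncation $\tau_k$ before mollifying rather than multiplying by a smooth cut-off. Everything else is a routine combination of the results already at our disposal.
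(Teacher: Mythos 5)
Your proof is correct and follows exactly the route the paper takes (the paper's own proof is the one-liner ``for smooth $\psi$ this has already been noted; the general case follows by approximation and the above theorem''): smooth the Lipschitz function without increasing its Lipschitz constant, apply Lemma~\ref{lem3.5}, and pass to the limit via the preceding a.e.-convergence theorem. Your additional care with the reduction to $\psi(0)=0$ and the truncate-then-mollify construction of the approximants simply supplies details the paper leaves implicit.
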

\begin{proof}
For smooth $\psi$ this has already been noted. The general case follows
by approximation and the above theorem.
\end{proof}

\begin{theorem}
Suppose that $I_U\in SBV_H(\mu)$. Then for every function $$f\in SBV_H(U,\mu)\cap L^\infty(U,\mu)$$
its extension by zero outside of $U$ gives a function in the class $SBV_H(\mu)$.

In the opposite direction, the restriction to $U$ of every function in $SBV_H(\mu)$,
not necessarily bounded,   gives a function in $SBV_H(U,\mu)$.

If $I_U\in BV_H(\mu)$, then the analogous assertions are true for the class~$BV_H(U,\mu)$.
\end{theorem}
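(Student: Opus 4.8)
The plan is to reduce both assertions to one-dimensional statements on the sections $U_{x,h}$ by means of conditional measures, in the spirit of Lemmas~\ref{lem3.1} and~\ref{lem3.4}, and to identify the derivative of the zero-extension $\widetilde f$ as the sum of the interior part $\Lambda_U f$ and a boundary part which, on each line $x+\mathbb{R}h$, consists of two endpoint atoms and which assembles into (the endpoint trace of $f$) $\cdot\,\Lambda_U I_U$.

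The restriction direction is the easy half and uses neither boundedness of $f$ nor the hypothesis $I_U\in SBV_H(\mu)$. Given $f\in SBV_H(\mu)$, put $\Lambda_U(f|_U):=I_U\cdot\Lambda f$, that is, $(\Lambda_U(f|_U),h)_H:=I_U\,(\Lambda f,h)_H$; this is an $H$-valued measure on $U$ whose semivariation (variation) does not exceed that of $\Lambda f$. Since $M_H(\mu)\subset L^1(\mu)$ and $\int_U|f|\,|\beta_h|\,d\mu\le\int_X|f|\,|\beta_h|\,d\mu$, the function $f|_U$ lies in $M_H(U,\mu)$. By Lemma~\ref{lem3.1}, for each $h$ and $\mu$-a.e.\ $x$ the function $t\mapsto f(x+th)$ is in $BV_{loc}(\mathbb{R})$ with generalized derivative $(\Lambda f,h)^{x,h,\mu}/\varrho^{x,h}$; restricting to the open interval $U_{x,h}$ and noting that $I_U(\Lambda f,h)_H$ disintegrates as $I_{U_{x,h}}(\Lambda f,h)^{x,h,\mu}\,\mu(dx)$, we obtain exactly the representation required in Definition~\ref{def3} for $f|_U$. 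Hence $f|_U\in SBV_H(U,\mu)$, and $f|_U\in BV_H(U,\mu)$ when $f\in BV_H(\mu)$.

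For the extension direction fix $h\in H$, $|h|_H\le1$, and set $M:=\|f\|_\infty$. Bounded semivariation of $\Lambda_U f$ makes $(\Lambda_U f,h)_H$ a finite scalar measure, so for $\mu$-a.e.\ $x$ its section $(\Lambda_U f,h)^{x,h,\mu}$ is a \emph{finite} measure supported in the open interval $J_{x,h}=U_{x,h}$; moreover $\partial_t\varrho^{x,h}=\beta_h(x+\cdot h)\varrho^{x,h}$ together with $\int_X|\beta_h|\,d\mu=\|d_h\mu\|<\infty$ shows that $\varrho^{x,h}$ is absolutely continuous for a.e.\ $x$, whence $f(x+\cdot h)\,\partial_t\varrho^{x,h}\in L^1(J_{x,h})$ since $|f|\le M$. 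Thus, for a.e.\ $x$, the distributional derivative of $G(t):=f(x+th)\varrho^{x,h}(t)$ on $J_{x,h}$ is a finite measure, so $G$ has bounded variation on $J_{x,h}$, and since $\varrho^{x,h}$ is continuous and strictly positive, the function $t\mapsto f(x+th)$ has finite one-sided limits at the endpoints $a_{x,h},b_{x,h}$ of $J_{x,h}$, bounded in absolute value by $M$. Consequently $t\mapsto\widetilde f(x+th)\varrho^{x,h}(t)=I_{J_{x,h}}(t)\,G(t)$ has bounded variation on $\mathbb{R}$, its derivative being that of $G$ on $J_{x,h}$ plus the endpoint atoms $G(a_{x,h}^{+})\delta_{a_{x,h}}$ and $-G(b_{x,h}^{-})\delta_{b_{x,h}}$; since $\partial_t\varrho^{x,h}$ has no atoms, and using the identity $(\Lambda_U I_U,h)^{x,h,\mu}=\varrho^{x,h}(a_{x,h})\delta_{a_{x,h}}-\varrho^{x,h}(b_{x,h})\delta_{b_{x,h}}$ (the same computation applied to $I_U$), the section of the candidate derivative of $\widetilde f$ comes out as $(\Lambda_U f,h)^{x,h,\mu}+\varphi_\partial\cdot(\Lambda_U I_U,h)^{x,h,\mu}$, where $\varphi_\partial$ denotes the endpoint one-sided limits of $f$, so $|\varphi_\partial|\le M$. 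Checking by Fubini and one-dimensional integration by parts that
\begin{multline*}
\int_X\partial_h\varphi\,\widetilde f\,d\mu=-\int_X\varphi\,(\Lambda_U f,h)_H-\int_X\varphi\,\varphi_\partial\,(\Lambda_U I_U,h)_H\\-\int_X\varphi\,\widetilde f\,\beta_h\,d\mu,\qquad\varphi\in\mathcal{F}\mathcal{C}^\infty,
\end{multline*}
and noting $\widetilde f\in M_H(\mu)$, Lemma~\ref{lem3.1} gives $\widetilde f\in SBV_H(\mu)$ with $\Lambda\widetilde f=\Lambda_U f+\varphi_\partial\,\Lambda_U I_U$ of semivariation at most $V(\Lambda_U f)+M\,V(\Lambda_U I_U)$. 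When $I_U\in BV_H(\mu)$ and $f\in BV_H(U,\mu)$ the same formula yields $\Lambda\widetilde f$ of bounded variation, at most ${\rm Var}(\Lambda_U f)+M\,{\rm Var}(\Lambda_U I_U)$, so $\widetilde f\in BV_H(\mu)$.

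The point requiring care is the implicit claim that $\varphi_\partial$ can be taken to be a single bounded Borel function on $X$, independent of the direction $h$, for only then is $\varphi_\partial\,\Lambda_U I_U$ a legitimate $H$-valued measure and is $h\mapsto(\Lambda_U f,h)_H(B)+\nu_h(B)$ (where $\nu_h$ is the boundary term $(\Lambda\widetilde f,h)_H-(\Lambda_U f,h)_H$) linear in $h$, hence representable by a vector of $H$. This is the infinite-dimensional counterpart of the fact that the trace of a $BV$ function on the reduced boundary is well defined off an exceptional set, and it is obtained by showing that for $|\Lambda_U I_U|$-almost every point of the $H$-boundary of $U$ the one-sided limits of $f$ along all admissible directions agree, using $H$-convexity and $H$-openness of $U$ together with the disintegration identities for $\mu$ and for $\Lambda_U I_U$ along pairs of directions; in the $BV_H$ case, where $\Lambda_U I_U$ is already a vector measure of bounded variation, $\varphi_\partial$ may alternatively be recovered as an $L^1(|\Lambda_U I_U|)$-limit of suitable conditional averages of $f$. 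Apart from this, the $BV_H$ argument is identical to the $SBV_H$ one with ``semivariation'' replaced throughout by ``variation''.
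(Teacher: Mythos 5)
Your proof follows the same route as the paper's: the restriction half is obtained by restricting $\Lambda f$ to $U$ (the paper invokes Lemma~\ref{lem3.4} for exactly this), and the extension half reduces to the one-dimensional sections, where the zero-extension of $f$ picks up endpoint atoms equal to the one-sided limits of $f\varrho^{x,h}$, which are dominated by $\|f\|_\infty$ times the atoms already present in the sections of $\Lambda I_U$ (note: what you denote $\Lambda_U I_U$ should be $\Lambda I_U$, the derivative of the indicator as an element of $SBV_H(\mu)$; the derivative of $I_U$ \emph{inside} $U$ is zero). Your bounds $V\le V(\Lambda_U f)+\|f\|_\infty V(\Lambda I_U)$ and its variation analogue are the paper's conclusion.

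The one point where you diverge is the step you flag yourself: assembling the scalar boundary measures $\nu_h$ into an $H$-valued measure. You propose to prove that the one-sided boundary limits of $f$ are independent of the direction $h$ off a null set, so that $\nu_h=\varphi_\partial\,(\Lambda I_U,h)_H$ for a single Borel trace $\varphi_\partial$. That claim is left unproven and is much stronger than needed. The efficient closure --- implicit in the paper via Lemma~\ref{lem3.4} and the Pettis-theorem step used in the proof of the preceding theorem --- is to note that the identity
$$
\int_X\partial_h\varphi\,\widetilde f\,d\mu=-\int_X\varphi\,d\bigl((\Lambda_U f,h)_H+\nu_h\bigr)-\int_X\varphi\,\widetilde f\,\beta_h\,d\mu,\qquad\varphi\in\mathcal{F}\mathcal{C}^\infty,
$$
which your fiberwise computation establishes, forces $h\mapsto\bigl((\Lambda_U f,h)_H+\nu_h\bigr)(B)$ to be linear (the other two terms are linear in $h$, and bounded cylindrical test functions determine Radon measures), while your estimate $\|\nu_h\|\le\|f\|_\infty\|(\Lambda I_U,h)_H\|$ gives continuity in $h$; hence each set value lies in $H$ and the Orlicz--Pettis/Pettis argument yields countable additivity. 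With that substitution your proof is complete and coincides with the paper's; no direction-independent trace is required.
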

\begin{proof}
Let $f\in SBV_H(U,\mu)\cap L^\infty(U,\mu)$. We may assume that $|f|\le 1$.
Let us fix $h\in H$. Then we can find a version of $f$ whose restrictions to the straight lines
 $x+\mathbb{R}h$ have locally bounded variation.
Let $a_x$ be an endpoint of the interval $U_{x,h}$ (if it exists).
Then the considered version of $f$ has a limit at~$a_x$ (left or right, respectively),
bounded by $1$ in the absolute value. Defined by zero outside of $U$, the function $f$
remains a function of locally bounded variation on all these straight lines, but
at the endpoints of $U_{x,h}$
its generalized derivative may gain Dirac measures with coefficients bounded by~$1$
in the absolute value.
However, such Dirac measures (with the coefficient $1$ at the left end and the coefficient $-1$
at the right end) are already present in the derivative of the restriction of $I_U$.
Thus, after adding these point measures to
 $(\Lambda_U f,h)_H^{x,h,\mu}$, we obtain a measure that differs from $(\Lambda_U f,h)_H$ by some
 measure with semivariation not exceeding $\|(\Lambda I_U,h)_H\|$, hence is also of bounded semivariation.
Therefore, Lemma~\ref{lem3.1} gives the inclusion of the extension to~$SBV_H(\mu)$.

The fact that $f|_U\in SBV_H(U,\mu)$ for any $f\in SBV_H(\mu)$ follows by Lemma \ref{lem3.4},
since the restriction of $\Lambda f$ to $U$ serves as $\Lambda_U f$.

In the case of $BV_H(\mu)$ we also use the fact that
any $H$-valued measure of bounded variation is given by  a Bochner integrable vector density
with respect to a suitable scalar measure.
\end{proof}

\begin{proposition}
Let $U$ be a Borel convex set. Then $I_U\in SBV_H(\mu)$.
\end{proposition}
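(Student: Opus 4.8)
The plan is to reduce everything to a one-dimensional statement along each direction $h\in H$ by means of conditional measures, using convexity of $U$ to bound the number of boundary points met by a line and Fomin differentiability to bound their contribution. Since $\mu$ is a probability measure, $I_U\in L^1(\mu)$, and $\sup_{|h|_H\le1}\|I_U\beta_h\|_{L^1(\mu)}\le\sup_{|h|_H\le1}\|\beta_h\|_{L^1(\mu)}=\sup_{|h|_H\le1}\|d_h\mu\|<\infty$ by continuity of the embedding $H\to D(\mu)$. Thus only an $H$-valued measure $\Lambda I_U$ of bounded semivariation with $d_h(I_U\mu)=(\Lambda I_U,h)_H+I_U\beta_h\mu$ for all $h$ has to be produced; by Lemma~\ref{lem3.1} it suffices to exhibit such a measure whose conditional components along almost every line $x+\mathbb{R}h$ equal $\varrho^{x,h}(t)$ times the generalized derivative of $t\mapsto I_U(x+th)$.

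Fix $h\in H$, a closed hyperplane $Y$ with $X=Y\oplus\mathbb{R}h$, and disintegrate $\mu=\mu^{y,h}\,\mu_Y(dy)$, where $\mu^{y,h}$ has the positive continuous conditional density $\varrho^{y,h}$ on $y+\mathbb{R}h$. Fomin differentiability along $h$ gives $\beta_h\in L^1(\mu)$ and, for $\mu_Y$-a.e.\ $y$, local absolute continuity of $\varrho^{y,h}$ with $\partial_t\varrho^{y,h}(t)=\varrho^{y,h}(t)\beta_h(y+th)$; integrating $|\partial_t\varrho^{y,h}|$ over $t$ and then over $Y$ yields $\int_Y\|\partial_t\varrho^{y,h}\|_{L^1(\mathbb{R})}\,\mu_Y(dy)=\int_X|\beta_h|\,d\mu=\|d_h\mu\|\le C|h|_H$, so $\varrho^{y,h}$ has finite total variation for $\mu_Y$-a.e.\ $y$; being nonnegative and integrable it tends to $0$ at $\pm\infty$, whence $\|\varrho^{y,h}\|_\infty\le\tfrac12\|\partial_t\varrho^{y,h}\|_{L^1(\mathbb{R})}$. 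Now convexity enters: since $U\cap(y+\mathbb{R}h)$ is convex, the section $J_{y,h}=\{t:\,y+th\in U\}$ is an interval with endpoints $a_y\le b_y$ in $[-\infty,+\infty]$, and $t\mapsto I_U(y+th)$ agrees a.e.\ with the indicator of $(a_y,b_y)$, whose generalized derivative is $\delta_{a_y}-\delta_{b_y}$ (a term at an infinite endpoint being omitted, the whole expression being $0$ when $J_{y,h}$ is a point or empty). Put
\[
\lambda^{y,h}:=\varrho^{y,h}(a_y)\,\delta_{y+a_yh}-\varrho^{y,h}(b_y)\,\delta_{y+b_yh},
\]
so that $\lambda^{y,h}/\varrho^{y,h}$ is precisely that generalized derivative, and the product rule on $\mathbb{R}$ for a $BV$ function times an absolutely continuous one shows that $\lambda^{y,h}+I_U(y+th)\,\partial_t\varrho^{y,h}(t)$ is the generalized derivative of $I_U(y+th)\varrho^{y,h}(t)$.

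Since $\varrho^{y,h}(a_y),\varrho^{y,h}(b_y)\le\|\varrho^{y,h}\|_\infty\le\tfrac12\|\partial_t\varrho^{y,h}\|_{L^1(\mathbb{R})}$, the scalar measure $\nu_h$ with $\nu_h(B)=\int_Y\lambda^{y,h}(B)\,\mu_Y(dy)$ is a finite signed measure and
\[
\|\nu_h\|\le\int_Y\|\lambda^{y,h}\|\,\mu_Y(dy)\le\int_Y\|\partial_t\varrho^{y,h}\|_{L^1(\mathbb{R})}\,\mu_Y(dy)\le C|h|_H .
\]
(Measurability of $y\mapsto a_y,b_y$ and $y\mapsto\varrho^{y,h}(a_y)$ with respect to every Borel measure is routine: $a_y,b_y$ are infima and suprema of the Borel set $\{(y,t):y+th\in U\}$, and the conditional densities form a jointly measurable family.) The one-dimensional integration by parts gives $\int\varphi\,d\nu_h=-\int\partial_h\varphi\cdot I_U\,d\mu-\int\varphi I_U\beta_h\,d\mu$ for $\varphi\in\mathcal{F}\mathcal{C}^\infty$; the right-hand side is linear in $h$, and since a bounded Radon measure on $X$ is determined by its integrals of functions from $\mathcal{F}\mathcal{C}^\infty$, the map $h\mapsto\nu_h$ is additive and positively homogeneous. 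Hence for each Borel $B$ the functional $h\mapsto\nu_h(B)$ is linear with $|\nu_h(B)|\le C|h|_H$, so by the Riesz representation theorem there is $\Lambda I_U(B)\in H$ with $(\Lambda I_U(B),h)_H=\nu_h(B)$; being weakly countably additive, $B\mapsto\Lambda I_U(B)$ is an $H$-valued measure by the Pettis theorem, with $V(\Lambda I_U)=\sup_{|h|_H\le1}\|\nu_h\|\le C<\infty$. Its conditional components along the lines $x+\mathbb{R}h$ are the measures $\lambda^{\pi x,h}$, so Lemma~\ref{lem3.1} yields $I_U\in SBV_H(\mu)$ together with $d_h(I_U\mu)=(\Lambda I_U,h)_H+I_U\beta_h\mu$.

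The crux, and the only genuine obstacle, is the pair of estimates $\|\lambda^{y,h}\|\le\|\partial_t\varrho^{y,h}\|_{L^1(\mathbb{R})}$ and $\int_Y\|\partial_t\varrho^{y,h}\|_{L^1(\mathbb{R})}\,\mu_Y(dy)\le C|h|_H$: convexity enters through the fact that each line meets $\partial U$ in at most two points (so $\lambda^{y,h}$ carries at most two atoms), and Fomin differentiability through the bounded variation of the conditional densities with uniform control by $|h|_H$. This argument yields only bounded \emph{semivariation} of $\Lambda I_U$, not bounded variation, which is why the statement asserts only $I_U\in SBV_H(\mu)$; membership in the smaller class $BV_H(\mu)$ would require the stronger hypothesis that $U$ have finite perimeter. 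The auxiliary points --- joint measurability of the conditional data and the assembly of the $H$-valued measure via Pettis's theorem --- are soft and parallel constructions carried out earlier in the section.
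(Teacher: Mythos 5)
Your proof is correct and follows essentially the same route as the paper's: the generalized derivative of $t\mapsto I_U(x+th)$ along a line is at most two Dirac masses at the endpoints of the convex section, one multiplies by the conditional density, and the resulting atoms are controlled by $\sup_t\varrho^{x,h}(t)\le\tfrac12\|\partial_t\varrho^{x,h}\|_{L^1}$, whose integral over the hyperplane is $\|d_h\mu\|\le C|h|_H$ by the continuity of the embedding $H\to D_C(\mu)$. You simply make explicit the semivariation bound and the assembly of the $H$-valued measure via Riesz representation and Pettis's theorem, which the paper leaves to a citation of Lemma~\ref{lem3.1}.
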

\begin{proof}
Let us fix $h\in H$.
If $U_{x,h}$ is not empty and not the whole straight line,
the generalized derivative $\sigma^{x,h}$ of the function
$t\mapsto I_U(x+th)$ is either the difference of two Dirac's measures
at the endpoints of $U_{x,h}$ or Dirac's measure (with the sign plus or minus)
at the single endpoint (if $U_{x,h}$ is a ray).
Let us define $(\Lambda I_U,h)_H^{x,h}$ by
$$
(\Lambda I_U,h)_H^{x,h}:=\sigma^{x,h}\varrho^{x,h}.
$$
We obtain a bounded measure $(\Lambda I_U,h)_H^{x,h}\, \mu(dx)$
(indeed, $\|\sigma^{x,h}\|\le 2$, $0\le I_{U_{x,h}}\le 1$,
$\|\varrho^{x,h}\|$ and $\|\partial_t \varrho^{x,h}\|$ are $\mu$-integrable),
which
defines an $H$-valued measure $\Lambda I_U$ with the properties mentioned
in Lemma~\ref{lem3.1}, which yields the desired conclusion.
\end{proof}

The simplest model example of the objects introduced
in this section is the  standard
Gaussian product-measure $\gamma$
on the countable power of the real line $\mathbb{R}^\infty$ (discussed in the previous sections).
Its Cameron--Martin space is the classical Hilbert space $H=l^2$.
For $U$ one can take any Borel convex set of positive measure for which the
intersections $(U-x)\cap H$ are open in $H$. Note that even for a bounded convex set
$U$ the indicator function $I_U$ does not always belong to $BV_H(\gamma)$;
explicit examples are given in \cite{C}.
On the other hand, the indicator of any  open convex set in the Gaussian
case belongs to $BV_H(\gamma)$ (see \cite{C}).

Note that in place of a Hilbert space $H$ embedded into $X$ Banach spaces can be used
(then vector-valued derivatives will take values in~$X^*$).

\vskip .1in

{\bf Acknowledgements}
This work has been supported by the RFBR projects
12-01-33009, 11-01-90421-Ukr-f-a, 11-01-12104-ofi-m,
Simons-IUM fellowship of  the Simons Foundation, and the SFB 701 at the
university of Bielefeld.

\end{document}